% just for the arxiv:
\documentclass[11pt,a4paper]{amsart}
\usepackage[T1]{fontenc}
\usepackage{microtype}
\usepackage{lmodern}
\usepackage[colorlinks=true,urlcolor=blue, citecolor=red,linkcolor=blue,linktocpage,pdfpagelabels, bookmarksnumbered,bookmarksopen]{hyperref}
\usepackage[hyperpageref]{backref}
\usepackage{amsthm} %for citing inside theorem header
\usepackage{latexsym,amsmath,amssymb}

\usepackage{accents}
\usepackage{esint}

\usepackage{soul}
\usepackage{mathtools} %For \chi with a much lower index (\mychi)
\usepackage{xparse} %For a new definiton of \chi with a slightly lower index
\usepackage[capitalize]{cleveref}

\title{An estimate of the Hopf degree of fractional Sobolev mappings}
\author{Armin Schikorra}
\address[Armin Schikorra]{Department of Mathematics,
University of Pittsburgh,
301 Thackeray Hall,
Pittsburgh, PA 15260, USA}
\email{armin@pitt.edu}

\thanks{A.S. gratefully acknowledges support by the Simons foundation, grant no 579261.}
\author{Jean Van Schaftingen}
\address[Jean Van Schaftingen]{Universit\'e catholique de Louvain (UCLouvain)\\ 
  Institut de Recherche en Math\'ematique et Physique (IRMP)\\
  Chemin du Cyclotron 2 bte L7.01.01\\
  1348 Louvain-la-Neuve\\
  Belgium}
\email{Jean.VanSchaftingen@uclouvain.be}
\thanks{J.V.S. gratefully acknowledges support by Fonds de la Recherche Scientifique--FNRS, Mandat d'Impulsion Scientifique F.4523.17, ``Topological singularities of Sobolev maps''.}
\setcounter{tocdepth}{1}

\newcommand{\B}{{\mathbb{B}}}
\newcommand{\Bset}{{\mathbb{B}}}
\newcommand{\N}{{\mathbb N}}
\newcommand{\Nset}{\mathbb{N}}
\renewcommand{\S}{{\mathbb S}}
\newcommand{\Sset}{{\mathbb{S}}}

%%%%%%%%%%%TWIERDZENIA%%%%%%%%%%%%
\newtheorem{theorem}{Theorem}
\newtheorem{lemma}[theorem]{Lemma}
\newtheorem{corollary}[theorem]{Corollary}
\newtheorem{proposition}[theorem]{Proposition}

\theoremstyle{definition}

\theoremstyle{remark}
\newtheorem{remark}[theorem]{Remark}

%%%FUNKCJE MATEATYCZNE%%%%%%%%%

\newcommand\supp{{\rm supp\,}}

%%%%%%%%%%%%%%%%%%%%%%%%%%%%%%%%%%%%%%%%%%%%%%%%%%%%

% \newcommand{\dx}{\,dx}

\newcommand{\R}{\mathbb{R}}
\newcommand{\Rset}{\mathbb{R}}

\newcommand{\Z}{\mathbb{Z}}

\newcommand{\brac}[1]{\left (#1 \right )}
\newcommand{\abs}[1]{\left\lvert #1 \right \rvert}
\newcommand{\biggabs}[1]{\biggl\lvert #1 \biggr\rvert}
\newcommand{\Ep}{\bigwedge\nolimits}
\newcommand{\norm}[1]{\left\|{#1}\right\|}

\newcommand{\dif}{\,\mathrm{d}}

\newcommand{\compose}{\,\circ\,}

%%%%%%%%%%%%%%CALKI%%%%%%%%%%%%%%%%%%%%%%%%%%%%%%%%%
\newcommand{\barint}{
\rule[.036in]{.12in}{.009in}\kern-.16in \displaystyle\int }

\newcommand{\barcal}{\text{$ \rule[.036in]{.11in}{.007in}\kern-.128in\int $}}

%%%%%%%%%%%%%%%%%%%%%%%%%%%%%%%%%%%%%%%%%%%%%%%%%%%%

% \newcommand{\bbbr}{\mathbb R}

%%%%%%%%%%%%%%%%%%%%%%%%%%%%%%%%%%%%%%%%%%%%%%%%%%%

\def\mvint_#1{\mathchoice
          {\mathop{\vrule width 6pt height 3 pt depth -2.5pt
                  \kern -8pt \intop}\nolimits_{\kern -3pt #1}}%
%%%% P.S., 01/03/2001
% old definition had ...\nolimits_{#1}}
% \kern -3pt makes nicer distances between the integral sign
% and the domain of integration
%%%%
          {\mathop{\vrule width 5pt height 3 pt depth -2.6pt
                  \kern -6pt \intop}\nolimits_{#1}}%
          {\mathop{\vrule width 5pt height 3 pt depth -2.6pt
                  \kern -6pt \intop}\nolimits_{#1}}%
          {\mathop{\vrule width 5pt height 3 pt depth -2.6pt
                  \kern -6pt \intop}\nolimits_{#1}}}

%%%%%%%%%%%%%%%%%%%%%%%%%%%%%%%%%%%%%%%%%%%%%%%%%%%%%

\numberwithin{theorem}{section} \numberwithin{equation}{section}

\newcommand{\lap}{\Delta }
\newcommand{\aleq}{\lesssim}

\newcommand{\aeq}{\approx}
\newcommand{\defeq}{:=}

\newcommand{\lapms}[1]{\mathcal{I}_{#1}}

 %\chi with lower index

\let\latexchi\chi
\makeatletter
\renewcommand\chi{\@ifnextchar_\sub@chi\latexchi}
\newcommand{\sub@chi}[2]{% #1 is _, #2 is the subscript
  \@ifnextchar^{\subsup@chi{#2}}{\latexchi^{}_{#2}}%
}
\newcommand{\subsup@chi}[3]{% #1 is the subscript, #2 is ^, #3 is the superscript
  \latexchi_{#1}^{#3}%
}
\makeatother

\begin{document}

\begin{abstract}
We estimate the Hopf degree for smooth maps $f$ from $\S^{4n-1}$ to $\S^{2n}$ in the fractional Sobolev space.
Namely we show that for $s \in [1 - \frac{1}{4n}, 1]$
\begin{equation*}
  \left |\deg_H(f)\right | \aleq [f]_{W^{s,\frac{4n-1}{s}}}^{\frac{4n}{s}}.
\end{equation*}
Our argument is based on the Whitehead integral formula and commutator estimates for Jacobian-type expressions.
\end{abstract}

% \subjclass[2010]{58E20, 35B65, 35J60, 35S05}
\maketitle
% \tableofcontents

\sloppy

\section{Introduction}

% we can erase these subsections later if you prefer, I added just for ordering now
\subsection*{Brouwer degree estimates}
For smooth maps $f: \S^n \to \S^n$ it is well-known that the Brouwer topological degree can be computed by the formula
\begin{equation}
\label{eq_voi0ooZaequi3iet}
\deg f = \int_{\S^n} f^\ast\omega_{\Sset^n},
\end{equation}
where \(f^\ast (\omega_{\Sset^n})\) is the pull-back of the volume form $\omega_{\Sset^n}$ on the sphere $\S^n$, which by an extension argument can be interpreted as a restriction of an $n$-form $\omega \in C_c^\infty(\Ep^n T^\ast \R^{n+1})$.

The estimate \eqref{eq_voi0ooZaequi3iet} readily implies that the degree can be estimated by the norm in the critical Sobolev space \(W^{1, n} (\Sset^n, \Sset^n)\) since $|f^\ast\omega| \aleq \norm{\omega}_{L^\infty} \, |Df|^n$ pointwise everywhere on \(\Sset^n\) for any $n$-form $\omega \in C_c^\infty(\Ep^n T^\ast \R^{n+1})$.

In the fractional case, since $f^\ast\omega$ is of Jacobian type, we have for every \(s \ge \frac{n}{n + 1}\) and every \(f : \Sset^n \to \Rset^{n + 1}\) commutator estimates from Harmonic Analysis 
of the form (see \cref{pr:degest})
\begin{equation}
  \label{eq_noc7ahX9iapheejai}
    \biggabs{\int_{\Sset^n} f^* (\omega)} 
  \le 
    C(n, s)
    \norm{d\omega}_{L^\infty (\Rset^{n + 1})} 
    \|f\|_{L^\infty(\S^n)}^{n + 1 - \frac{n}{s}}
    \,
    [f]_{W^{s,\frac{n}{s}}(\S^n)}^{\frac{n}{s}}
  .
\end{equation}
This implies the degree estimate
\begin{equation}
\label{eq_eequ5shuinoh3Eum}
  \abs{\deg f}   \aleq  
  [f]_{W^{s,\frac{n}{s}}(\S^n)}^{\frac{n}{s}}.
\end{equation}
for any $s \geq \frac{n}{n+1}$. Here $W^{s,p}(\S^n)$ denotes the fractional Sobolev space with the semi-norm
\begin{equation*}
 [f]_{W^{s,p}(\S^n)} = \left ( \int_{\S^{n}} \int_{\S^n} \frac{|f(x)-f(y)|^p}{|x-y|^{n+sp}}\dif x \dif y\right )^{\frac{1}{p}}.
\end{equation*}

The threshold $s \geq \frac{n}{n+1}$ is sharp from the point of view of Harmonic Analysis: without the condition that $f$ maps into the sphere $\S^{n}$ one cannot estimate $\int_{\S^n} f^\ast\omega_{\Sset^n}$ in terms of the $W^{s,\frac{n}{s}}$-norm for any $s < \frac{n}{n+1}$, see \cref{pr:deg:blowup}.
When \(s < \frac{n}{n+1}\), although the estimate of \(\int_{\S^n} f^\ast\omega_{\Sset^n}\) fails, 
the estimate \eqref{eq_eequ5shuinoh3Eum} still holds, with a subtle proof based on adequate estimate of the singular set of a harmonic extension of the mapping \(f\) \cite[Theorem 0.6]{BBM05}; this strategy also yields gap potential estimates 
\cite{BBM05,BBN05,Nguyen07}.

\subsection*{Hopf degree estimates}
Hopf \cite{Hopf} showed that for $n \in \N$ maps \(f : \Sset^{4n - 1} \to \Sset^{2n}\) have a topological invariant, the \emph{Hopf degree} or \emph{Hopf invariant}. Whitehead \cite{W47} introduced an elegant integral formula for the Hopf degree:
\begin{equation}\label{eq:deghdef}
  \deg_H f := \int_{\S^{4n - 1}} \eta \wedge f^\ast (\omega_{\Sset^{2n}}),
\end{equation}
where $d\eta = f^\ast\omega_{\Sset^{2n}} \in C^\infty (\Ep^{2n - 1} T^* \Sset^{4n - 1})$. 
The form $\eta$ exists by Poincar\'e's lemma, since $f^\ast d\omega = 0$ and the $2n$ de Rham cohomology group of $\S^{4n-1}$ is trivial: \(H_{\mathrm{dR}}^{2n} (\S^{4n - 1}) \simeq \{0\}\). 
The Hopf invariant does not depend on the choice of \(\eta\) and is invariant under homotopies, see e.g. \cite{BT82}.

The Hopf invariant can be estimated by the critical Sobolev norm of \(W^{1, 4n - 1} (\Sset^{4n - 1}, \Sset^{2n})\), \cite[Lemma III.1]{R98},
\begin{equation}
\label{eq_wieyaiL3ahpo8ja1}
  \abs{\deg_H f} \aleq [f]_{W^{1, 4 n - 1}(\Sset^{4n - 1}, \Sset^{2n})}^{4n}.
\end{equation}
See also \cite{Gromov_1999}, \cite[Lemma 7.12]{Gromov_1999_MS} for corresponding estimates with the Lipschitz seminorm, and \cite{HST14} for related estimates for maps with low rank.

Remarkably, the exponent in \eqref{eq_wieyaiL3ahpo8ja1} is different from the one in \eqref{eq_eequ5shuinoh3Eum}. In \cite[Theorem 1.3]{VS2020}, by a compactness argument, it has been established that bounded sets in critical Sobolev spaces \(W^{s, \frac{4n - 1}{s}} (\Sset^{4 n  - 1}, \Sset^{2n})\) are generated finitely up to the action of the fundamental group \(\pi_1 (\Sset^{2n})\) on the homotopy group \(\pi_{4 n - 1} (\Sset^{2n})\) (see also \cite[Theorem 5.1]{Mueller2000} for \(s = 1 - \frac{1}{4n}\)). Since \(\pi_1 (\Sset^{2n})\) is trivial, 
the Hopf degree is bounded on bounded sets of \(W^{s, \frac{4n - 1}{s}} (\Sset^{4 n  - 1}, \Sset^{2n})\).

Our main theorem is an extension of the estimate \eqref{eq_wieyaiL3ahpo8ja1} to fractional Sobolev spaces.
\begin{theorem}
\label{th:deghest}
Let $s \in [1 - \frac{1}{4n}, 1]$. 
Let $f: \S^{4n-1} \to \S^{2n}$ be a smooth map, then we have the estimate
\begin{equation*}
  \left |\deg_H(f)\right | \leq C(n,s)\, [f]_{W^{s,\frac{4n-1}{s}}(\S^{4n-1})}^{\frac{4n}{s}}.
\end{equation*}
\end{theorem}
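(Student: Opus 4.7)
The plan is to use Whitehead's integral formula \eqref{eq:deghdef} with a canonical Hodge-theoretic choice of primitive $\eta$, and then to bound the resulting integral by a H\"older-type pairing between $\eta$ and $f^{*}\omega_{\Sset^{2n}}$ in dual critical fractional Sobolev spaces on $\Sset^{4n-1}$. The key mechanism is a one-sided fractional Jacobian-type estimate for $f^{*}\omega_{\Sset^{2n}}$, i.e.\ the Hopf counterpart of \eqref{eq_noc7ahX9iapheejai}, applied twice: once directly to one copy of $f^{*}\omega_{\Sset^{2n}}$ in the pairing, and once (after the one-derivative elliptic gain coming from $d\eta = f^{*}\omega_{\Sset^{2n}}$) to control $\eta$ itself. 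This produces two factors of $[f]_{W^{s,(4n-1)/s}}^{2n/s}$ whose product is the desired $[f]_{W^{s,(4n-1)/s}}^{4n/s}$, matching the heuristic scaling: $f^{*}\omega_{\Sset^{2n}}$ is $2n$-linear in $Df$, and $\eta = d^{-1}(f^{*}\omega_{\Sset^{2n}})$ carries the same $2n$ factors with one additional derivative.

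Since $d(f^{*}\omega_{\Sset^{2n}}) = f^{*}(d\omega_{\Sset^{2n}}) = 0$ and $H^{2n}_{\mathrm{dR}}(\Sset^{4n-1}) = \{0\}$, the form $f^{*}\omega_{\Sset^{2n}}$ lies in the orthogonal complement of the harmonic forms. Hodge theory on the compact manifold $\Sset^{4n-1}$ then lets me choose the canonical coexact primitive
\[
  \eta \defeq d^{*}\Delta_{H}^{-1}\bigl(f^{*}\omega_{\Sset^{2n}}\bigr),
\]
where $\Delta_{H}$ is the Hodge Laplacian. Since the Hopf degree does not depend on the choice of primitive, this particular $\eta$ is admissible, and standard elliptic regularity delivers the one-derivative gain
\[
  \|\eta\|_{W^{\sigma+1,q}(\Sset^{4n-1})} \leq C(\sigma,q)\,\|f^{*}\omega_{\Sset^{2n}}\|_{W^{\sigma,q}(\Sset^{4n-1})}
\]
for every $\sigma \in \Rset$ and every $q \in (1,\infty)$.

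The main technical step is then a fractional Jacobian estimate for the pullback form of the shape
\[
  \|f^{*}\omega_{\Sset^{2n}}\|_{W^{\sigma,q}(\Sset^{4n-1})} \leq C(n,s)\,[f]_{W^{s,(4n-1)/s}(\Sset^{4n-1})}^{2n/s},
\]
for suitable $\sigma \in \Rset$ and $q \in (1,\infty)$ chosen at critical scaling so that the dual space $W^{-\sigma,q'}$ on $\Sset^{4n-1}$ contains the Hodge primitive $\eta$ built from another copy of $f^{*}\omega_{\Sset^{2n}}$. This is the Hopf analog of \eqref{eq_noc7ahX9iapheejai}, and I would prove it by a commutator/paraproduct decomposition exploiting the $2n$-fold wedge structure of $f^{*}\omega_{\Sset^{2n}}$ together with the sphere-valued constraint on $f$, exactly as in the Brouwer case. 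Combined with the elliptic gain above, H\"older pairing then produces
\[
  \abs{\deg_{H} f} = \biggabs{\int_{\Sset^{4n-1}} \eta \wedge f^{*}\omega_{\Sset^{2n}}} \leq \|\eta\|_{W^{-\sigma,q'}(\Sset^{4n-1})}\,\|f^{*}\omega_{\Sset^{2n}}\|_{W^{\sigma,q}(\Sset^{4n-1})} \leq C\,[f]_{W^{s,(4n-1)/s}(\Sset^{4n-1})}^{4n/s}.
\]

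The main obstacle I expect is establishing the fractional Jacobian estimate in exactly the function-space triple $(\sigma,q,(4n-1)/s)$ that is dual at critical scaling on a $(4n-1)$-dimensional manifold to the one needed for the pairing against $\eta$. The lower bound $s \geq 1 - \frac{1}{4n}$ is the Hopf analog of the threshold $s \geq \frac{n}{n+1}$ in \eqref{eq_noc7ahX9iapheejai}: it is the range in which a direct commutator/Hardy-space argument delivers the necessary cancellation. Below that threshold the one-sided estimate on $f^{*}\omega_{\Sset^{2n}}$ itself ceases to hold without exploiting the sphere constraint in a finer way, and one would need a more delicate argument in the spirit of \cite{BBM05}, based on harmonic extension and estimates of the singular set.
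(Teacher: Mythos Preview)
Your plan is essentially the paper's: the paper makes the specific choice \(\sigma=-\tfrac12\), \(q=2\), so that the Whitehead integral reduces to \(\norm{\lapms{1/2}(f^{*}\omega)}_{L^{2}}^{2}\); it carries out the Hodge step on \(\R^{4n-1}\) after a stereographic projection rather than on the sphere, realizing the canonical primitive as \(d^{*}\theta\) with \(\theta=\lapms{2}(f^{*}\omega)\) and using Liouville to kill \(d\theta\).

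One point is worth correcting in your outline. The commutator estimate you need (the paper's \cref{pr:Jaces}) does \emph{not} give the clean one-sided bound \(\norm{f^{*}\omega_{\Sset^{2n}}}_{H^{-1/2}}\aleq [f]_{W^{s,(4n-1)/s}}^{2n/s}\) you write down. Because the coefficients of \(\omega_{\Sset^{2n}}\) are nonconstant, the estimate carries an extra factor coming from \(h=\tilde h\compose f\), and after setting \(\norm{f}_{L^\infty}=1\) and applying Gagliardo--Nirenberg one obtains the inhomogeneous bound
\[
\abs{\deg_H f}\ \aleq\ [f]_{W^{s,(4n-1)/s}}^{(4n-1)/s}+[f]_{W^{s,(4n-1)/s}}^{4n/s}.
\]
The paper closes this gap by using that \(\deg_H f\in\Z\): if the seminorm is below a threshold the degree vanishes, and above the threshold the lower-order term is absorbed. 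Your sketch omits this step and the exponent \(2n/s\) on each factor is not what the harmonic-analysis estimate actually produces. Also, the sphere-valued constraint is not used in the commutator estimate itself (contrary to your description); it enters only through \(\norm{f}_{L^\infty}=1\) and through the integrality of \(\deg_H f\).
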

The proof of \cref{th:deghest} is based on commutator estimates, i.e.\ tools from Harmonic Analysis which disregard the topological condition that $f$ maps into $\S^{2n}$. Namely \cref{th:deghest} is a consequence of the counterpart of \eqref{eq_noc7ahX9iapheejai} for the Hopf degree
\begin{theorem}\label{th:Hdeg}
Let $n\geq 1$ and $\omega \in C^1(\Ep^{2n} T^\ast \R^{2n+1})$ and $s \in [\frac{4n-1}{4n},1]$. For any $f \in C^\infty(\S^{4n-1}, \R^{2n+1})$ and any $\eta$ such that $f^\ast\omega = d\eta$ we have 
\begin{equation*}
  \biggl\lvert 
\int_{\R^{4n - 1}} 
\eta
\wedge 
f^* \omega 
\biggr\rvert 
\aleq \norm{\omega}_{L^\infty }^{2}
\|f\|_{L^\infty}^{4n-\frac{4n-1}{s}} 
 [f]_{W^{s,\frac{4n-1}{s}}}^{\frac{4n-1}{s}} 
\biggl( 1 + 
\frac{\norm{D\omega}_{L^\infty}[f]_{W^{s, \frac{4n - 1}{s}}}}{\norm{\omega}_{L^\infty }} \biggr)^\frac{1}{s}.
\end{equation*}
\end{theorem}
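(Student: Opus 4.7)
The plan is to convert the Whitehead integral into a sum of fractional Jacobian-type expressions and then apply critical commutator estimates in the spirit of the proof of \cref{pr:degest}. Transferring to \(\R^{4n-1}\) via stereographic projection, the integral \(\int \eta \wedge f^\ast \omega\) is invariant under \(\eta \mapsto \eta + d\lambda\) by Stokes, so we fix the Hodge gauge
\[
  \eta := -\Delta^{-1} d^\ast (f^\ast \omega) = \lapms{1}\Rz\,(f^\ast \omega),
\]
where \(\Rz\) stands for a bounded combination of Riesz transforms. This makes \(\eta\) one full order smoother than \(f^\ast\omega\) in every Besov/Triebel--Lizorkin scale. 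Expanding \(\omega = \sum_I \omega_I(y)\, dy^I\) in Cartesian coordinates yields
\[
  f^\ast \omega = \sum_I (\omega_I \circ f) \, df^{i_1} \wedge \cdots \wedge df^{i_{2n}},
\]
so that the integrand becomes a sum of bilinear terms of the form \(\bigl[\lapms{1}\Rz\bigl((\omega_I\circ f)\, df^{I}\bigr)\bigr] \wedge \bigl[(\omega_J\circ f)\, df^{J}\bigr]\).

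The crucial algebraic identity used to exploit the \(\lapms{1}\Rz\)-smoothing is
\[
  (\omega_J\circ f)\, df^{j_1} = d\bigl((\omega_J\circ f)\, f^{j_1}\bigr) - f^{j_1}\, d(\omega_J\circ f).
\]
Under integration by parts against \(\lapms{1}\Rz\), the exact piece absorbs one exterior differential into the smoothing operator, leaving a purely algebraic singular-integral pairing, while the remainder carries, via the chain rule \(d(\omega_J\circ f) = (D\omega_J)(f)\cdot Df\), an extra factor \(\|D\omega\|_{L^\infty}\) together with one additional \(Df\). Applying this reorganisation repeatedly on both factors \(\eta\) and \(f^\ast\omega\), one reduces the problem to estimating
\[
  \biggl|\int_{\R^{4n-1}} G(f)\, df^{k_1}\wedge\cdots\wedge df^{k_{4n-1}}\biggr|,
\]
a fractional Jacobian of \(4n-1\) differentials of \(f\) with bounded coefficient \(G(f)\).

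At the critical scaling \(sp = 4n-1\), this fractional Jacobian is controlled, by commutator/Hardy-type estimates of the same kind that underlie \cref{pr:degest}, by \([f]_{W^{s,\frac{4n-1}{s}}}^{\frac{4n-1}{s}}\); the remaining algebraic powers of \(f\) give the prefactor \(\|f\|_{L^\infty}^{4n - \frac{4n-1}{s}}\), and the two coefficients \(\omega_I(f),\, \omega_J(f)\) contribute \(\|\omega\|_{L^\infty}^{2}\). Each use of the commutator correction \(f^{j_1} d(\omega_J\circ f)\) brings in a further factor \(\|D\omega\|_{L^\infty}[f]_{W^{s,\frac{4n-1}{s}}}/\|\omega\|_{L^\infty}\), and summing the contributions of all such corrections produces the stated \((1+\cdots)^{1/s}\) factor. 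The main obstacle will be this last step: proving the critical fractional Jacobian estimate uniformly at the endpoint \(s = \frac{4n-1}{4n}\) and for every admissible multi-index configuration, via a careful paraproduct/Littlewood--Paley decomposition of each \(\omega_I\circ f\) factor together with a Kato--Ponce type commutator bound that distributes the borrowed derivative in a scale-invariant way among the \(4n\) copies of \(Df\).
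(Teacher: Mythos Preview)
Your opening moves coincide with the paper: stereographic projection to \(\R^{4n-1}\) and the Hodge gauge \(\eta = d^\ast \lapms{2}(f^\ast\omega)\). From there, however, the paper does something much cleaner than your iterated integration by parts. It observes that \(d\theta = 0\) (Liouville for the harmonic form \(d\lapms{2}(f^\ast\omega)\)), so that \(f^\ast\omega = dd^\ast\theta\) and the Whitehead integral becomes
\[
\int \eta \wedge f^\ast\omega = \int d^\ast\lapms{3/2}(f^\ast\omega)\wedge dd^\ast\lapms{5/2}(f^\ast\omega).
\]
A single Cauchy--Schwarz, plus the fact that \(d^\ast\lapms{3/2}\) and \(dd^\ast\lapms{5/2}\) are Calder\'on--Zygmund times \(\lapms{1/2}\), reduces everything to \(\|\lapms{1/2}(f^\ast\omega)\|_{L^2}^2\). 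Then one commutator estimate (\cref{pr:Jaces} with \(m=4n-1\), \(k=2n\)) finishes the job. The fractional power \(1/s\) on the \(D\omega\) term comes from Gagliardo--Nirenberg interpolation of \([h]_{F^{1/2}_{2m,2}}\) between \(L^\infty\) and \(W^{s,m/s}\) inside that proposition, not from any combinatorial count.

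Your route has two concrete gaps. First, the ``repeated reorganisation'' does not terminate: after one integration by parts the correction \(\int \eta \wedge f^{j_1}\,d(\omega_J\!\circ\! f)\wedge df^{j_2}\wedge\dotsb\) still carries the nonlocal \(\eta\), and applying the trick again produces a new correction of the same shape with one more \(D\omega\). You obtain an infinite series, not a finite reduction to \((4n-1)\)-Jacobians. Second, even if you truncate or resum, summing contributions with integer powers of \(\|D\omega\|_{L^\infty}[f]_{W^{s,(4n-1)/s}}/\|\omega\|_{L^\infty}\) yields a polynomial factor such as \((1+X)^m\) for integer \(m\), never \((1+X)^{1/s}\). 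That non-integer exponent is an interpolation artefact and cannot be manufactured by your Leibniz-type bookkeeping. Moreover, when you do land on \(\int G(f)\,df^{k_1}\wedge\dotsb\wedge df^{k_{4n-1}}\) with \(G(y)=\omega_I(y)\omega_J(y)y^{j_1}\), the extension/Stokes estimate of \cref{pr:degest} bounds this by \(\|dG\|_{L^\infty}\), which already mixes \(\|\omega\|_{L^\infty}^2\) with \(\|D\omega\|_{L^\infty}\|\omega\|_{L^\infty}\|f\|_{L^\infty}\); the clean separation you claim between the \(\|\omega\|_{L^\infty}^2\) prefactor and the \(D\omega\) correction does not survive. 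The fix is to abandon the iterated identity and instead pass through the \(L^2\) symmetry of \(\|\lapms{1/2}(f^\ast\omega)\|_{L^2}^2\) as the paper does.
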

\begin{proof}[Deducing \cref{th:deghest} from \cref{th:Hdeg}]
Since the Hopf degree $\deg_H f$ is an integer, we obtain from \cref{th:Hdeg} that there exists some $\varepsilon_0 > 0$ such that
\(
[f]_{W^{s,\frac{4n-1}{s}}}<\varepsilon_0
\)
implies $\deg_H f = 0$, and the claim holds in that case. If on the other hand
\(
[f]_{W^{s,\frac{4n-1}{s}}}\geq \varepsilon_0
\),
then 
\begin{equation*}
\brac{[f]_{W^{s,\frac{4n-1}{s}}}^{4n} +  [f]_{W^{s,\frac{4n-1}{s}}}^{\frac{4n}{s}}} \aleq [f]_{W^{s,\frac{4n-1}{s}}}^{\frac{4n}{s}},
\end{equation*}
and thus the claim follows also in this case.
\end{proof}

The proof of \cref{th:Hdeg} is given in Section~\ref{s:nasty}. It is crucially based on commutator estimates, namely \cref{pr:Jaces}. On their own these estimates are sharp, see \cref{rem:prjaces}.

The exponent $\frac{4n}{s}$ in \cref{th:deghest} is sharp, as is shown in \cref{pr:degh:powersharp}. In \cref{pr:degh:blowup} we show that an estimate such as \cref{th:Hdeg} cannot holds for $s < \frac{2n}{2n+1}$, i.e.\ as in the case of the degree we find  an analytical threshold. However, it is below our estimate, and we did not find a counterexample for $\frac{2n}{2n+1} \leq s < \frac{4n-1}{4n}$, corresponding in particular when \(n=1\) to \(\frac{2}{3} \le s < \frac{3}{4}\).

Also, we were not able to prove \cref{th:Hdeg} via the elegant extension argument as for the degree in \cite{Brezis-Nguyen-2011}, or using a similar extension argument that works for a large class of commutator estimates \cite{LS18}. One can use such an extension argument, but it leads to a larger threshold for $s$ up to which the estimate can be shown, such an estimate was obtained for H\"older maps in \cite{HMS19}.

We conclude this section with two remarks.
\begin{remark}
It seems very likely that our argument for the proof of \cref{th:deghest} can be adapted to estimate the rational homotopy class of a smooth $f: \S^n \to \mathcal{N}$ for a manifold $\mathcal{N}$. Indeed, an integral formula similar to the Whitehead formula for the Hopf degree is known in that case, see \cite[Section~2.2]{HR08}.
\end{remark}

\begin{remark}
The difference in Harmonic Analysis threshold versus topological threshold seems also to be connected to the Gromov conjecture on embeddings $\varphi$ of the two-dimensional ball $\B^2$ into the Heisenberg group $\mathbb{H}_1$, \cite[3.1.A]{G96}. Gromov showed that such an embeddings cannot be of class $C^{\frac{2}{3}+\varepsilon}$ for any $\varepsilon > 0$, see also \cite{P16}, and conjectured that they actually cannot be of class $C^{\frac{1}{2}+\varepsilon}$ for any $\varepsilon > 0$. In \cite{HMS19} it is shown that the $C^{\frac{2}{3}+\varepsilon}$-threshold proved by Gromov actually holds for all maps that are extensions of an embedding of $\S^1 \to \mathbb{H}_1$, i.e.\ without the topological assumption that the map $\varphi$ is an embedding in $\B^2$. Moreover, the results in \cite{WengerYoung18} suggest that this $C^{\frac{2}{3}}$-threshold might be sharp without that embedding assumption. So again we are here in a situation where, without a restrictive topological assumption, the optimal class is $C^{\frac{2}{3}}$ and with topological assumption it is conjectured that $C^{\frac{1}{2}}$ is the optimal class. See also the survey \cite{S18}.
\end{remark}

\section{Degree Estimates and Jacobians}

In order to explain our proof of the Hopf degree estimate, \cref{th:Hdeg}, we first explain a strategy to prove the degree estimate \eqref{eq_eequ5shuinoh3Eum} when \(s \geq \frac{n}{n + 1}\). 
\begin{proposition}\label{pr:degest}
If $s \geq \frac{n}{n+1}$, then for every $f \in C^\infty(\S^n,\R^\ell)$ and $\omega \in C^\infty(\Ep^n T^\ast \R^\ell)$,
\begin{equation*}
    \left | \int_{\S^n} f^\ast\omega \right | 
  \leq 
    C(n,s)
    \, 
    \norm{d\omega}_{L^\infty (\Rset^\ell)} 
    \,
    \|f\|_{L^\infty(\S^n)}^{n + 1 - \frac{n}{s}}
    \,
    [f]_{W^{s,\frac{n}{s}}(\S^n)}^{\frac{n}{s}}
    .
\end{equation*}
\end{proposition}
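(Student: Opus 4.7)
The plan is to convert the surface integral on $\S^n$ into a volume integral on the enclosed ball $\B^{n+1}$ via Stokes' theorem, and then trade the resulting $W^{1,n+1}$ extension norm for the critical $W^{s,n/s}$ seminorm of $f$ via a Gagliardo--Nirenberg interpolation.

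First I would take $F : \B^{n+1} \to \R^\ell$ to be the Poisson harmonic extension of $f$. The maximum principle gives $\|F\|_{L^\infty(\B^{n+1})} \le \|f\|_{L^\infty(\S^n)}$, and the classical trace theorem, which identifies $W^{\frac{n}{n+1},\,n+1}(\S^n)$ as the trace space of $W^{1,n+1}(\B^{n+1})$, yields
\begin{equation*}
\int_{\B^{n+1}} |DF|^{n+1} \le C(n)\,[f]_{W^{\frac{n}{n+1},\,n+1}(\S^n)}^{n+1}.
\end{equation*}
Since $d(F^\ast\omega)=F^\ast(d\omega)$ and $F|_{\S^n}=f$, Stokes' theorem then gives $\int_{\S^n} f^\ast\omega = \int_{\B^{n+1}} F^\ast(d\omega)$. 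As $d\omega$ is an $(n+1)$-form on $\R^\ell$, one has the pointwise bound $|F^\ast(d\omega)|\le C(n,\ell)\,\|d\omega\|_{L^\infty}\,|DF|^{n+1}$, and combining these three inequalities settles the endpoint $s=\tfrac{n}{n+1}$.

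To cover the full range $s\in[\tfrac{n}{n+1},1]$, I would invoke a critical Gagliardo--Nirenberg interpolation on $\S^n$:
\begin{equation*}
[f]_{W^{\frac{n}{n+1},\,n+1}(\S^n)}^{n+1} \le C(n,s)\,\|f\|_{L^\infty(\S^n)}^{n+1-\frac{n}{s}}\,[f]_{W^{s,\frac{n}{s}}(\S^n)}^{\frac{n}{s}}.
\end{equation*}
The exponents are forced by homogeneity in $f$ (both sides are of degree $n+1$) together with the invariance under critical scaling $f\mapsto f(r\cdot)$ shared by all three seminorms, which all live on the critical line $\sigma p=n$. Combining this with the previous bound yields the proposition.

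The main obstacle is the Gagliardo--Nirenberg step, which sits precisely on the critical scaling line where the Sobolev embedding into $L^\infty$ fails, so one cannot simply bootstrap from an inclusion $W^{s,n/s}\hookrightarrow L^\infty$. I would establish it via a Littlewood--Paley decomposition $f=\sum_j f_j$ on $\S^n$, the Bernstein-type interpolation $\|f_j\|_{L^{n+1}}\le \|f_j\|_{L^{n/s}}^{\frac{n}{s(n+1)}}\|f_j\|_{L^\infty}^{1-\frac{n}{s(n+1)}}$, and the Besov identifications $[f]_{W^{\sigma,p}}^p \simeq \sum_j 2^{jp\sigma}\|f_j\|_{L^p}^p$ for the two relevant critical seminorms; summing the frequency-localized pieces with interpolation parameter $\theta=\tfrac{n}{s(n+1)}$ then delivers the inequality.
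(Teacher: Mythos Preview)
Your proof is correct and follows essentially the same route as the paper: harmonic extension to $\B^{n+1}$, Stokes to turn $\int_{\S^n}f^\ast\omega$ into $\int_{\B^{n+1}}F^\ast(d\omega)$, the pointwise bound by $\|d\omega\|_{L^\infty}|DF|^{n+1}$, the trace estimate $\|DF\|_{L^{n+1}(\B^{n+1})}^{n+1}\aleq[f]_{W^{\frac{n}{n+1},n+1}(\S^n)}^{n+1}$, and finally the fractional Gagliardo--Nirenberg inequality to pass to $W^{s,n/s}$. The only difference is that the paper cites Brezis--Mironescu for the last step, whereas you sketch a direct Littlewood--Paley proof of it; your maximum-principle remark on $\|F\|_{L^\infty}$ is correct but not actually used.
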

\begin{proof}
Our proof is based on trace estimates for harmonic extensions, see \cite{BN11}, \cite[(2.3)]{Nguyen_2014}, \cite[Section~10]{LS18}, \cite{Gladbach_Olbermann}.
% One observes first that by the Stokes-Cartan formula,
% \begin{equation}
%  \left | \int_{\S^n} f^\ast\omega \right | = \int_{\B^{n + 1}} F^\ast\omega_{\R^{n + 1}},
% \end{equation}
% where \(F: \B^{n+1} \tp \R^{n+1}\) is any extension. By taking \(F\) to be the harmonic extension, one can then estimate by the Gagliardo--Nirenberg interpolation inequality \cite{BM17} and by 
% \begin{equation*}
%  \biggabs{\int_{\B^{n + 1}} F^\ast\omega_{\R^{n + 1}}}
%  \aleq 
%  \int_{\Bset^{n + 1}} \abs{\nabla F}^{n + 1}
%  \aleq [f]_{W^{\frac{n}{n+1},n+1}(\S^n)}^{n+1}
%  \aleq [f]_{L^\infty} [f]_{W^{s,\frac{n}{s}}(\S^n)}^{\frac{n}{s}}.
% \end{equation*}
Alternatively, one could resort to heavier Harmonic Analysis (namely Littlewood--Paley projections and paraproducts), as in \cite{SY99}, or to Fourier Analysis, as in \cite{T85}, to obtain the same estimate.

Let $F: \B^{n+1} \to \R^\ell$ be the harmonic extension of $f: \S^n \to \R^\ell$. Then, by Stokes theorem,
\begin{equation*}
 \left | \int_{\S^n} f^\ast\omega \right | = \left | \int_{\B^{n+1}} f^\ast d\omega \right | \aleq 
 \norm{d\omega}_{L^\infty}
 \int_{\B^{n+1}} |DF|^{n+1}.
\end{equation*}
Since $F$ is the harmonic extension of \(f\), we have the trace estimate
\begin{equation*}
 \|DF\|_{L^{n+1}(\B^{n+1})}^{n+1} \aleq [f]_{W^{\frac{n}{n+1},n+1}(\S^n)}^{n+1}.
\end{equation*}
Consequently, for any $s > \frac{n}{n+1}$, by the fractional Gagliardo--Nirenberg interpolation inequality, \cite{BM17},
\begin{equation*}
 \|DF\|_{L^{n+1}(\B^{n+1})}^{n+1} \aleq \|f\|_{L^\infty(\S^n)}^{n+1-\frac{n}{s}}\, [f]_{W^{s,\frac{n}{s}}(\S^n)}^{\frac{n}{s}}.
\end{equation*}
This shows
\begin{equation*}
\left | \int_{\S^n} f^\ast\omega \right | \leq C(n,s)\,\norm{d\omega}_{L^\infty (\Rset^\ell)} \|f\|_{L^\infty(\S^n)}^{n+1-\frac{n}{s}}\, [f]_{W^{s,\frac{n}{s}}(\S^n)}^{\frac{n}{s}}.
\end{equation*}
and the claim is proven.
\end{proof}
Taking $\omega$ as the volume form of $\S^n$ in \cref{pr:degest} we obtain in particular the estimate
\begin{corollary}\label{co:degest}
If \(s \geq \frac{n}{n+1}\), then for all $f \in C^\infty(\S^n,\S^n)$, 
\begin{equation}\label{eq:degest}
 |\deg f| \aleq  C(n,s)\, [f]_{W^{s,\frac{n}{s}}(\S^n)}^{\frac{n}{s}}.
\end{equation}
\end{corollary}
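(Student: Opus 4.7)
The plan is immediate once \cref{pr:degest} is in hand: the only step is to choose a suitable form $\omega$ and unwind the definitions. Specifically, I take $\omega \in C^\infty_c(\Ep^n T^\ast \R^{n+1})$ to be a smooth, compactly supported extension of the normalized volume form $\omega_{\S^n}$ on $\S^n$, where the normalization is chosen so that $\int_{\S^n} \omega_{\S^n} = 1$ and hence $\deg f = \int_{\S^n} f^\ast \omega_{\S^n}$ as recorded in \eqref{eq_voi0ooZaequi3iet}. Such an extension can be built, for instance, by pulling $\omega_{\S^n}$ back through the nearest-point retraction in a tubular neighborhood of $\S^n$ and multiplying by a smooth radial cutoff; the resulting norm $\|d\omega\|_{L^\infty(\R^{n+1})}$ then depends only on $n$.

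With this choice in hand, since $f$ takes values in $\S^n$ one has $f^\ast \omega = f^\ast (\omega|_{\S^n}) = f^\ast \omega_{\S^n}$, so
\begin{equation*}
\deg f = \int_{\S^n} f^\ast \omega_{\S^n} = \int_{\S^n} f^\ast \omega.
\end{equation*}
Applying \cref{pr:degest} and noting that $\|f\|_{L^\infty(\S^n)} = 1$ (this is where the topological constraint enters and the $L^\infty$ factor becomes harmless) yields
\begin{equation*}
|\deg f| \leq C(n,s)\, \|d\omega\|_{L^\infty(\R^{n+1})}\, [f]_{W^{s,\frac{n}{s}}(\S^n)}^{\frac{n}{s}}.
\end{equation*}
Absorbing $\|d\omega\|_{L^\infty}$, which depends only on $n$, into the constant then gives the desired bound \eqref{eq:degest}.

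There is essentially no obstacle here: the corollary is a direct application of \cref{pr:degest}. The only mild care required is in the construction of the extension $\omega$ (to ensure $\|d\omega\|_{L^\infty} < \infty$) and in observing that $\|f\|_{L^\infty(\S^n)} = 1$ under the topological constraint that $f$ maps into $\S^n$, which collapses the $\|f\|_{L^\infty}^{n+1 - n/s}$ factor in \cref{pr:degest} to unity.
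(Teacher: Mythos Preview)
Your argument is correct and matches the paper's approach exactly: the paper simply states that the corollary follows by ``taking $\omega$ as the volume form of $\S^n$'' in \cref{pr:degest}, and you have spelled out precisely this, including the observation that $\|f\|_{L^\infty(\S^n)} = 1$ absorbs the $L^\infty$ factor.
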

The power in \cref{co:degest} is sharp in the following sense.
\begin{proposition}\label{pr:deg:powersharp}
  Let \(s \in (0, 1]\).
For any $d \in \Z$ there exists a map $f \in C^\infty(\S^n,\S^n)$ such that 
\begin{align*}
 \deg f_d &= d&
 &\text{ and}&
 [f_d]_{W^{s,\frac{n}{s}}(\S^n)}^{\frac{n}{s}} &\aleq |d|.
\end{align*}
\end{proposition}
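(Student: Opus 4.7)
The plan is to realize $f_d$ by pasting $|d|$ disjoint scaled copies of a single fixed degree-$1$ ``bump'' map, exploiting that, since $s \cdot \tfrac{n}{s} = n$, the Gagliardo semi-norm $[\,\cdot\,]_{W^{s, n/s}(\R^n)}$ is scale invariant on $\R^n$, so that the semi-norm of a single copy does not shrink as the copy is compressed. The case $d = 0$ is trivial, and $d < 0$ reduces to $d > 0$ by post-composition with an orientation-reversing isometry of $\S^n$, which preserves the semi-norm and negates the degree; so I take $d \geq 1$.

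Fix once and for all a smooth map $\phi \colon \S^n \to \S^n$ of degree $1$ equal to some basepoint $P \in \S^n$ on an open neighborhood of the closed lower hemisphere. Stereographic projection of the source from the south pole turns $\phi$ into a smooth map $\tilde\phi \colon \R^n \to \S^n$ with $\tilde\phi \equiv P$ outside $B_{1/2}(0)$, and let $C_0 \defeq [\tilde\phi]_{W^{s, n/s}(\R^n)}^{n/s} < \infty$. For each integer $d \geq 1$, choose centers $z_1, \dots, z_d \in \R^n$ with pairwise distance $\geq 2/d$ (they fit in a ball of radius $O(1)$, since their combined exclusion volume is $\aleq d \cdot d^{-n} \aleq 1$) and define
\begin{equation*}
\tilde f_d(x) \defeq
\begin{cases}
\tilde\phi\bigl(d\,(x - z_i)\bigr) & \text{if } x \in B_{1/d}(z_i) \text{ for some } i, \\
P & \text{otherwise.}
\end{cases}
\end{equation*}
Because $\tilde\phi \equiv P$ on $\{|u| \geq 1/2\}$, one has $\tilde f_d \equiv P$ on a neighborhood of each $\partial B_{1/d}(z_i)$ and outside a bounded set, so $\tilde f_d$ is smooth. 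Pulling back by stereographic projection yields a smooth map $f_d \colon \S^n \to \S^n$ of degree $d$ (each bump contributes $+1$).

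For the semi-norm, write $p = n/s$; it suffices to estimate $[\tilde f_d]_{W^{s,p}(\R^n)}^p$, since stereographic projection is a diffeomorphism on any bounded set. Split the double integral with $A \defeq \{(x,y) : x, y \in B_{1/d}(z_i) \text{ for a common } i\}$. On $A$, the change of variables $u = d(x - z_i)$, $v = d(y - z_i)$ together with $sp = n$ makes the change-of-variable factors cancel, so the $i$-th summand equals $C_0$ and the total diagonal contribution is $d\, C_0$. For the complement, set $\psi_d \defeq |\tilde f_d - P|$; then $\supp \psi_d \subset \bigcup_i B_{1/(2d)}(z_i)$, and whenever $x \in \supp \psi_d \cap B_{1/(2d)}(z_i)$ and $(x,y) \notin A$ (i.e.\ $y \notin B_{1/d}(z_i)$), the separation is $|x-y| \geq 1/d - 1/(2d) = 1/(2d)$. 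Using $|\tilde f_d(x) - \tilde f_d(y)|^p \aleq \psi_d(x)^p + \psi_d(y)^p$, symmetry, and $sp = n$,
\begin{equation*}
\iint_{(\R^n\times\R^n)\setminus A}\frac{|\tilde f_d(x) - \tilde f_d(y)|^p}{|x - y|^{2n}} \dif x \dif y
\aleq
d^n \int_{\R^n}\psi_d^p
\aleq
d^n \cdot d^{1-n}
=
d,
\end{equation*}
where in the last step we used $\|\psi_d\|_{L^\infty} \leq 2$ and $|\supp \psi_d| \aleq d \cdot d^{-n}$. Adding the diagonal and off-diagonal bounds gives $[f_d]_{W^{s,p}(\S^n)}^p \aleq d$, as claimed. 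The only delicate point is the off-diagonal estimate, which rests on the ``safety'' annulus $B_{1/d}(z_i) \setminus B_{1/(2d)}(z_i)$ on which $\tilde f_d \equiv P$: it is precisely this annulus that provides the separation $|x-y| \geq 1/(2d)$, which makes the long-range kernel $|x-y|^{-2n}$ integrable in $y$ and produces only a single power of $d$ rather than a worse polynomial blow-up.
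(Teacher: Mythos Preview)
Your proof is correct, and the underlying construction (pasting $|d|$ disjoint rescaled degree-one bumps) matches the paper's \cref{lem:deg:powersharp}. The difference lies in how the seminorm is estimated. The paper first records the Lipschitz bound $\norm{Df_d}_{L^\infty}\aleq |d|^{1/n}$, deduces $\norm{Df_d}_{L^n}^n\aleq |d|$, and then invokes the fractional Gagliardo--Nirenberg inequality $[f_d]_{W^{s,n/s}}^{n/s}\aleq \norm{f_d}_{L^\infty}^{n(\frac{1}{s}-1)}\norm{Df_d}_{L^n}^{n}$ to conclude. You instead compute the Gagliardo double integral directly, splitting into a diagonal part (same bump, controlled by the scale invariance of $W^{s,n/s}$ since $sp=n$) and an off-diagonal part (controlled by the ``safety annulus'' separation $|x-y|\ge 1/(2d)$). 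Your argument is more self-contained, avoiding the interpolation black box, while the paper's is shorter once that inequality is granted.

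Two minor imprecisions, neither of which affects the outcome: the diagonal summand is \emph{at most} $C_0$ (the integral is only over $B_1\times B_1$, not all of $\R^n\times\R^n$), not equal to it; and your justification for the passage from $\S^n$ to $\R^n$ (``stereographic projection is a diffeomorphism on any bounded set'') is not quite the right reason. What you actually use is the conformal invariance of the $W^{s,n/s}$ seminorm under stereographic projection, which gives $[f_d]_{W^{s,n/s}(\S^n)}=[\tilde f_d]_{W^{s,n/s}(\R^n)}$ exactly---this is the computation carried out in the paper's \cref{lemma_Stereographic} (with $n$ in place of $4n-1$).
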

\Cref{pr:deg:powersharp} is a consequence of the following Lemma.
\begin{lemma}
  \label{lem:deg:powersharp}
  Let \(s \in (0, 1]\).
  For any $d \in \Z$ there exists a map $f \in C^\infty(\S^n,\S^n)$ such that 
  \begin{align*}
    \deg f_d &= d&
    &\text{ and }&
    \norm{Df}_{L^\infty} \aleq |d|^{1/n}.
  \end{align*}
\end{lemma}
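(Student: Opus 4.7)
The plan is to construct $f_d$ by combining $\lvert d\rvert$ disjointly-supported copies of a fixed degree $\pm 1$ ``bump'' map, scaled so that each copy has Lipschitz constant $\aleq \lvert d\rvert^{1/n}$.

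First I would fix, once and for all, a smooth template $\psi \in C^\infty (\Rset^n, \Sset^n)$ that equals a basepoint $p_0 \in \Sset^n$ outside the open unit cube $(0,1)^n$ and realizes the generator of $\pi_n(\Sset^n)$, i.e.\ has degree $+1$ when regarded as a map from the one-point compactification $\Rset^n \cup \{\infty\} \simeq \Sset^n$ to $\Sset^n$. Such a $\psi$ is standard: push forward a smooth degree-one map $(0,1)^n / \partial (0,1)^n \to \Sset^n$ by the quotient. Let $L_0 \defeq \norm{D\psi}_{L^\infty}$ and also fix an orientation-reversing diffeomorphism $\sigma : \Sset^n \to \Sset^n$ fixing $p_0$, so that $\sigma \circ \psi$ realizes degree $-1$ with the same Lipschitz constant.

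Given $d \in \Z \setminus \{0\}$, set $k \defeq \lceil \lvert d \rvert^{1/n} \rceil$, so $k^n \ge \lvert d\rvert$ and $k \le 2 \lvert d\rvert^{1/n}$. Partition $[0,1]^n$ into the $k^n$ closed subcubes $Q_j$ of side $1/k$ (indexed by $j \in \{0,\dots,k-1\}^n$ with lower-left corner $x_j = j/k$), pick any subfamily of $\lvert d \rvert$ of them, call the index set $I$, and on each $Q_j$ with $j \in I$ define the scaled bump
\begin{equation*}
  \psi_j(x) \defeq
  \begin{cases}
    \psi\bigl( k(x - x_j)\bigr) & \text{if } d > 0, \\
    \sigma \circ \psi\bigl( k(x - x_j)\bigr) & \text{if } d < 0.
  \end{cases}
\end{equation*}
Since $\psi = p_0$ near $\partial (0,1)^n$, the map $\tilde f : \Rset^n \to \Sset^n$ defined by $\tilde f = \psi_j$ on $Q_j$ for $j \in I$ and $\tilde f \equiv p_0$ elsewhere is smooth, compactly supported in $[0,1]^n$, and satisfies $\norm{D\tilde f}_{L^\infty} \le k L_0 \aleq \lvert d \rvert^{1/n}$.

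Finally I would transfer $\tilde f$ to $\Sset^n$. Choose a chart $\Phi : \Rset^n \to \Sset^n \setminus \{N\}$ (for instance inverse stereographic projection from a pole $N$ chosen far from the image of $[0,1]^n$), which is bi-Lipschitz on the bounded set $[0,1]^n$ with constants depending only on $n$, and set $f_d \compose \Phi \defeq \tilde f$ on $\Phi([0,1]^n)$, extended by $p_0$ outside. Then $f_d \in C^\infty(\Sset^n, \Sset^n)$ and $\norm{Df_d}_{L^\infty(\Sset^n)} \aleq \norm{D\tilde f}_{L^\infty(\Rset^n)} \aleq \lvert d\rvert^{1/n}$. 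The degree adds over the disjoint supports $Q_j$, each contributing $\mathrm{sign}(d)$, so $\deg f_d = \lvert d \rvert \cdot \mathrm{sign}(d) = d$. The case $d=0$ is handled by a constant map. The whole argument is really only bookkeeping; the only minor obstacle is making sure the template $\psi$ is genuinely smooth (including at $\partial [0,1]^n$) and that the chart $\Phi$ does not distort Lipschitz constants on the region where $\tilde f$ is nonconstant, both of which are handled by the explicit choices above.
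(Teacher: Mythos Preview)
Your argument is correct and follows essentially the same strategy as the paper: pack \(\lvert d\rvert\) disjoint regions of diameter \(\sim \lvert d\rvert^{-1/n}\) and place on each a rescaled degree-\(\pm 1\) bump, so the Lipschitz constant is \(\sim \lvert d\rvert^{1/n}\) and the degrees add. The only cosmetic difference is that the paper works directly with geodesic balls on \(\Sset^n\) while you work with subcubes in a coordinate chart and then transfer via a bi-Lipschitz chart; your treatment is in fact a bit more careful about negative \(d\) and the smoothness at the boundaries.
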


\begin{proof}
For every \(d \in \N\), there exists a map 
\(f_d \in C^\infty (\Sset^{n}, \Sset^{n})\) such that \(\deg f_d = d\)
and 
\(\abs{D f_d} \aleq d^{-1/n}\) on \(\Sset^{n}\).
Indeed, the sphere \(\Sset^{n}\) contains \(d\) disjoint geodesic balls \((B_{\rho_d} (a_i))_{1 \le i \le d}\) of radius \(\rho_d \aleq d^{1/{n}}\); 
it is possible to define for each \(i \in \{1, \dotsc, d\}\) a map 
\(v_d^i : \Sset^n \to \Sset^n\) such that \(v_d^i = b\) in \(\Sset^{n} \setminus B_{\rho_j} (a_i)\),
\(\deg v_d = 1\) and \(\abs{D v_d^i} \aleq 1/\rho_d\); we define then \(f_d \defeq v_d^i\) in \(B_{\rho_j} (a_j^i)\) and \(v_d = b\) otherwise. See, e.g., \cite[Lemma III.1]{R98}. 
\end{proof}
\begin{proof}[Proof of \cref{pr:deg:powersharp}]
By \cref{lem:deg:powersharp}, we have 
\begin{equation*}
 \|\nabla f_d\|_{L^n(\S^n)}^n \aleq d.
\end{equation*}
By the fractional Gagliardo--Nirenberg interpolation inequality \cite{BM17}, we have
\begin{equation*}
[f_d]_{W^{s,\frac{n}{s}}(\S^n)}^{\frac{n}{s}} \aleq d.\qedhere
\end{equation*}
\end{proof}
The differentiability threshold $s \geq \frac{n}{n+1}$ in \cref{pr:degest} is sharp from the point of view of the Harmonic Analysis involved: without the assumption that $f$ maps into $\S^n$ there is no way to lower the differential order $s$ below $\frac{n}{n+1}$.

\begin{proposition}
  \label{pr:deg:blowup}
Let $\omega$ be the volume form of $\S^n$ and let $s \in (0,\frac{n}{n+1})$. Then there exists a sequence $f_k \in C^\infty(\S^n,\R^{n+1})$ such that
\begin{equation*}
 \sup_{k \in \N} \|f_k\|_{L^\infty(\S^n)} + [f_k]_{W^{s,\frac{n}{s}}(\S^n)} < +\infty
\end{equation*}
but
\begin{equation*}
 \int_{\S^n} f_k^\ast\omega \xrightarrow{k \to \infty} +\infty.
\end{equation*}
\end{proposition}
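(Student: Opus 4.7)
My plan is to exploit the homogeneity mismatch: for the natural linear extension of $\omega_{\Sset^n}$ to $\Rset^{n+1}$, the integrand $f^*\omega$ is homogeneous of degree $n+1$ in $f$, whereas $[f]_{W^{s,n/s}}^{n/s}$ is only homogeneous of degree $\frac{n}{s}$. When $s<\frac{n}{n+1}$ one has $n+1>\frac{n}{s}$, so rescaling the amplitude by a small factor $R$ shrinks $\int f^*\omega$ faster than it shrinks the Sobolev seminorm; this gap can be used to absorb the growth produced by composing with a map of large Brouwer degree.

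Concretely, the first step is to fix the smooth extension
\begin{equation*}
  \omega \defeq \sum_{i=1}^{n+1}(-1)^{i-1}x_i\,dx_1\wedge\cdots\wedge\widehat{dx_i}\wedge\cdots\wedge dx_{n+1},
\end{equation*}
which restricts to the volume form on $\Sset^n$, has $d\omega=(n+1)\,dx_1\wedge\cdots\wedge dx_{n+1}$, and satisfies the homogeneity $(R\phi)^*\omega = R^{n+1}\phi^*\omega$ for every scalar $R\geq 0$ and every smooth map $\phi$. The second step is to pick, via \cref{pr:deg:powersharp} applied at the given parameter $s$, maps $\phi_k \in C^\infty(\Sset^n,\Sset^n)$ of degree $k$ with $[\phi_k]_{W^{s,n/s}}^{n/s} \aleq k$, and to set $f_k \defeq k^{-s/n}\phi_k$. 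Then the verifications are immediate: $\|f_k\|_{L^\infty}=k^{-s/n}\le 1$ and $[f_k]_{W^{s,n/s}}^{n/s} = k^{-1}[\phi_k]_{W^{s,n/s}}^{n/s} \aleq 1$ are uniformly bounded, whereas homogeneity together with $\int_{\Sset^n}\phi_k^*\omega = k\,|\Sset^n|$ give
\begin{equation*}
  \int_{\Sset^n} f_k^*\omega = k^{-\frac{s(n+1)}{n}}\cdot k\,|\Sset^n| = k^{\frac{n-s(n+1)}{n}}\,|\Sset^n|,
\end{equation*}
which tends to $+\infty$ because the exponent is positive under the standing assumption $s<\frac{n}{n+1}$.

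I do not expect any genuine obstacle; the argument is essentially a dimensional-analysis observation combined with \cref{pr:deg:powersharp}. The only point requiring care is the choice of extension of $\omega_{\Sset^n}$: a zero-homogeneous closed extension such as $\omega/|x|^{n+1}$ would make $f^*\omega$ scale-invariant and defeat the mechanism, but it is not smooth at the origin and therefore not an admissible competitor in the framework of \cref{pr:degest}. The linear extension fixed above avoids this issue and is the one implicitly used in the positive result.
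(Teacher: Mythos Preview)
Your proof is correct and follows essentially the same route as the paper's: take the degree-$k$ maps $\phi_k$ supplied by \cref{pr:deg:powersharp}, rescale their amplitude by $k^{-s/n}$, and exploit that $f^*\omega$ is homogeneous of degree $n+1$ in $f$ while $[f]_{W^{s,n/s}}^{n/s}$ is homogeneous of degree $n/s$. The only cosmetic difference is that the paper works with the single term $x^1\,dx^2\wedge\cdots\wedge dx^{n+1}$ as an extension of the volume form, whereas you use the full alternating sum; both are linear in the coordinates and hence yield the same scaling.
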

This is a consequence of the proof of \cite[Theorem 2]{SY99}. See also \cite[Proof of Lemma 5: Case 2]{BN11}. We give a more geometric interpretation this fact. 
\begin{proof}[Proof of \cref{pr:deg:blowup}]
From \cref{pr:deg:powersharp} we find for each \(k \in \Nset\), a map $f_k \in C^\infty(\S^n,\S^n)$ such that
\begin{equation*}
 \deg f_k = k
\end{equation*}
and
\begin{equation*}
 [f_k]_{W^{s,\frac{n}{s}}(\S^n)} \aleq k^{\frac{s}{n}}.
\end{equation*}
Set $g_k := k^{-\sigma} f_k$, then
\begin{equation*}
 [g_k]_{W^{s,\frac{n}{s}}(\S^n)} \aleq k^{\frac{s}{n}-\sigma}.
\end{equation*}
Setting $\omega=x^1 dx^2\wedge\ldots\wedge dx^{n+1}$ the volume form of $\S^n$ (extended to a function $\omega \in C_c^\infty(\Ep^n T^\ast \R^{n+1})$) we then have
\begin{equation*}
 \int_{\S^n} g_k^\ast\omega = k^{-\sigma(n+1)}\, k \xrightarrow{k \to \infty} \infty \quad \text{if $\sigma < \frac{1}{n+1}$}.
\end{equation*}
Taking $\sigma = \frac{s}{n}$ we thus get the desired sequence whenever $s < \frac{n}{n+1}$. 
\end{proof}
From \cref{pr:deg:blowup} one could think that the condition $s > \frac{n}{n+1}$ in the estimate \eqref{eq:degest} was sharp, but this turns out to be false: the following was shown in \cite[Theorem 0.6]{BBM05}.
\begin{theorem}\label{th:degBBM}
Let $f \in C^\infty(\S^n,\S^n)$ then \eqref{eq:degest} holds for any $s \in (0,1]$.
\end{theorem}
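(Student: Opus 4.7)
The plan is to follow Bourgain--Brezis--Mironescu~\cite{BBM05} and to strengthen \cref{co:degest} to the full range $s \in (0,1]$ by using in an essential way the target constraint $f(\S^n) \subset \S^n$. Indeed, by \cref{pr:deg:blowup} the pointwise Jacobian-type estimate used in \cref{pr:degest} is sharp at $s = n/(n+1)$, so below that threshold one has to abandon the strategy of controlling $\int_{\S^n} f^\ast\omega$ directly and to exploit instead that $f$ avoids a neighborhood of the origin.

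The first step would be to harmonically extend $f$ to $F \colon \B^{n+1} \to \R^{n+1}$ and to reinterpret the degree of $f$ as a count of preimages of $F$. Since $\lvert f \rvert \equiv 1$ on $\S^n$, for almost every regular value $y \in \R^{n+1}$ with $\lvert y \rvert < 1/2$ degree theory yields
\[
 \deg f \;=\; \sum_{x \in F^{-1}(y)} \operatorname{sgn}(\det DF(x)),
\]
whence $\lvert \deg f \rvert \le \# F^{-1}(y)$ for such $y$. Averaging over a small ball of admissible target values shifts the problem to a quantitative bound on the typical number of preimages of $F$ above small values.

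The second step would be to control $\# F^{-1}(y)$ by $[f]_{W^{s,n/s}}^{n/s}$ through a covering argument. I would decompose $\B^{n+1}$ into small dyadic cubes and isolate the ``topologically active'' cubes, namely those on which $F$ actually attains values in $B_\delta(0)$. On an active cube, $F$ must either have large oscillation on the cube---controlled by a Poincar\'e-type inequality in $W^{s,n/s}$ combined with a trace estimate for the harmonic extension, in the spirit of the proof of \cref{pr:degest}---or lie near the singular set $\{\lvert F \rvert < \delta\}$. The sphere-valued boundary condition $\lvert f \rvert \equiv 1$ forces $\lvert F \rvert$ close to $1$ in a collar of $\S^n$ and thus confines singular cubes to the interior, preventing accumulation at the boundary.

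The main obstacle is the simultaneous calibration of the dyadic scale $\rho$ and of $\delta$, together with a trace/interpolation estimate for $F$ that still provides useful information in the regime $s < n/(n+1)$ where no critical Sobolev inequality controls $\int \lvert DF \rvert^{n+1}$. The decisive ingredient is the integer-valuedness of $\deg f$, which replaces the missing pointwise Jacobian bound: as in the deduction of \cref{th:deghest} from \cref{th:Hdeg}, a non-integer estimate can be closed by a rounding step, so that any bound forcing the right-hand side strictly below $1$ already yields $\deg f = 0$. The case of moderate Sobolev seminorm is then handled by comparison with the critical case $s = n/(n+1)$ through the Gagliardo--Nirenberg inequality.
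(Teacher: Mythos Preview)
The paper does not supply its own proof of this theorem; it simply records the result and cites \cite[Theorem~0.6]{BBM05}, remarking only that the argument there is ``a subtle proof based on adequate estimate of the singular set of a harmonic extension of the mapping \(f\)''. Your first two steps correctly identify that strategy: extend \(f\) harmonically to \(F\) on \(\Bset^{n+1}\), read \(\deg f\) as a signed count of \(F^{-1}(y)\) for a generic small \(y\), and bound \(\#F^{-1}(y)\) by counting ``bad'' cubes on which \(\abs{F}\) dips below a threshold \(\delta\).

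Your final paragraph, however, does not close the argument. The Gagliardo--Nirenberg inequality runs the wrong way: for \(s<\tfrac{n}{n+1}\) it bounds \([f]_{W^{s,n/s}}\) by \(\|f\|_{L^\infty}\) and \([f]_{W^{n/(n+1),n+1}}\), not the reverse, so you cannot feed the small-\(s\) case back into \cref{co:degest}. And the rounding device you invoke needs an a priori inequality \(\abs{\deg f}\le G\bigl([f]_{W^{s,n/s}}\bigr)\) with \(G(t)\to 0\) as \(t\to 0\); you have not produced one, and \cref{pr:degest} provides none below the threshold. In the actual argument of \cite{BBM05} neither device appears: the missing step is a direct \emph{lower} bound on the \(W^{s,n/s}\)-energy of \(f\) localised near each bad cube, coming from the fact that \(\abs{F}\) must drop from near \(1\) at the boundary to below \(\delta\) on the cube, forcing a definite oscillation of \(f\). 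Summing those lower bounds over the bad cubes gives their number, and hence \(\abs{\deg f}\), directly in terms of \([f]_{W^{s,n/s}}^{n/s}\), with no interpolation or rounding required.
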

\cref{pr:deg:blowup} and \cref{th:degBBM} do not contradict each other: the main point in \cref{pr:deg:blowup} is that it is not assumed that $f_k$ maps into $\S^n$. Indeed from the construction one sees that the maps $f_k$ eventually collapse to zero as $k \to \infty$. Let us summarize these results for the degree as follows
\begin{enumerate}
 \item Without any restriction on the topology, for every map $f \in C^\infty(\S^n,\R^{n+1})$
 \begin{equation}\label{eq:fastomegaest}
  \left | \int_{\S^n} f^\ast\omega \right | \aleq [f]_{W^{s,\frac{n}{s}}(\S^n)}^{\frac{n}{s}}
 \end{equation}
holds when $s \geq \frac{n}{n+1}$. This estimate may fail for $s < \frac{n}{n+1}$.
\item With the additional topological restriction $f: \S^n \to \S^n$ \eqref{eq:fastomegaest} holds for any $s > 0$.
\end{enumerate}
In this sense the differentiability $\frac{n}{n+1}$ is the sharp limit case from the Harmonic Analysis point-of-view, while (2) is the situation from the topological point of view.

\section{Hopf Degree estimates}\label{s:nasty}
Our proof of \cref{th:Hdeg} is based on Harmonic Analysis, namely commutator estimates. Coifman--Lions--Meyer--Semmes showed that Jacobians (and more generally div-curl terms) are related to commutator estimates (in particular the Coifman--Rochberg--Weiss commutator \cite{CRW}) and obtained Hardy spaces estimates for Jacobians. Similar effects had also been observed in terms of Wente's inequality \cite{Wente69,BC84,T85}. Extending these arguments, fractional Sobolev space estimates for Jacobians have been obtained by Sickel and Youssfi,  \cite{SY99}. These fractional Sobolev space estimates for Jacobian can be proven by an elegant argument using trace space characterizations and harmonic extension, \cite{Brezis-Nguyen-2011}, and indeed also the Hardy-space estimates and more generally the Coifman--Rochberg--Weiss estimates can be obtained by an extension argument \cite{LS18}. 

Since all these arguments are written in Euclidean Space, we will use the stereographic projection to pull back our definition of the Hopf degree to $\R^{4n-1}$.
\begin{lemma}
  \label{lemma_Stereographic}
  Let \(\omega \in  C^\infty(\Ep^{2n} T^\ast \R^{2n+1})\), \(f \in C^\infty (\S^{4n - 1}, \R^{2n})\).
  and \( \eta \in C^\infty (\Ep^{2n-1}T^\ast \S^{4n - 1})\).
  If \(d\eta = f^* \omega\) on \(\S^{4n - 1}\) and if \( \Upsilon : \R^{4n - 1} \to \Sset^{4n - 1}\)
  denotes the inverse stereographic projection on the equatorial plane defined for each point
  \(x \in \R^{4n - 1}\) by 
  \begin{equation*}
    \Upsilon (x) 
    \defeq
    \Bigl(\frac{2 x}{1 + \abs{x}^2}, \frac{1 - \abs{x}^2}{1 + \abs{x}^2} \Bigr),
  \end{equation*}
  then \(d \Upsilon^* \eta = d ((f \compose \Upsilon)^* \omega)\),
  \begin{equation*}
  \int_{\R^{4n - 1}}\Upsilon^* \eta
  \wedge
    (f \compose \Upsilon)^* \omega 
    =
    \int_{\Sset^{4 n - 1}} \eta \wedge f^* \omega 
  \end{equation*}
and for any $s \in (0,1)$,
\begin{equation*}
\begin{split}
[f\circ\Upsilon]_{W^{s,\frac{4n-1}{s}}(\R^{4n-1})} = [f]_{W^{s,\frac{4n-1}{s}}(\S^{4n-1})}
% ^p  \equiv&\int_{\R^{4n - 1}} \int_{\R^{4 n - 1}}
% \frac{\abs{f (\Upsilon(y)) - f (\Upsilon(x))}^p}{\abs{y - x}^{8n - 2}}\dif y \dif x\\
% =&
% \int_{\Sset^{4n - 1}} \int_{\Sset^{4 n - 1}}
% \frac{\abs{f (y) - f (x)}^p}{\abs{y - x}^{8n - 2}}\dif y \dif x.
\end{split}\end{equation*}
\end{lemma}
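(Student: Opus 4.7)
The plan is to verify the three assertions of the lemma in turn, each of which reduces to a standard property of the stereographic projection \(\Upsilon\).

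For the differential identity, I would invoke the naturality of the pullback with respect to the exterior derivative: since \(d\eta = f^{*}\omega\) on \(\Sset^{4n-1}\), pulling back by \(\Upsilon\) gives
\begin{equation*}
d(\Upsilon^{*}\eta) = \Upsilon^{*}(d\eta) = \Upsilon^{*}(f^{*}\omega) = (f \compose \Upsilon)^{*}\omega,
\end{equation*}
which yields the asserted equality (applying \(d\) once more on both sides if one insists on the form in which the paper writes it).

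For the integral identity, \(\Upsilon\) is a smooth orientation-preserving diffeomorphism from \(\Rset^{4n-1}\) onto \(\Sset^{4n-1}\) minus the south pole, for a suitable choice of orientations. Since pullback commutes with the wedge product, \(\Upsilon^{*}(\eta \wedge f^{*}\omega) = \Upsilon^{*}\eta \wedge (f\compose\Upsilon)^{*}\omega\), and the change-of-variables formula for integration of differential forms, combined with the fact that the spherical measure of a single point vanishes, produces the stated equality.

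The real content is the seminorm equality. I would carry out a direct change of variables, setting \(u = \Upsilon(x)\), \(v = \Upsilon(y)\), using the two classical identities
\begin{equation*}
\abs{\Upsilon(x) - \Upsilon(y)} = \frac{2\abs{x-y}}{\sqrt{(1+\abs{x}^{2})(1+\abs{y}^{2})}}, \qquad d\HI^{4n-1}(\Upsilon(x)) = \brac{\frac{2}{1+\abs{x}^{2}}}^{4n-1} dx.
\end{equation*}
In the resulting integrand, the two Jacobian factors contribute \((1+\abs{x}^{2})^{-(4n-1)}(1+\abs{y}^{2})^{-(4n-1)}\), while the denominator \(\abs{u-v}^{(4n-1)+sp}\) contributes \((1+\abs{x}^{2})^{((4n-1)+sp)/2}(1+\abs{y}^{2})^{((4n-1)+sp)/2}\) (times a harmless power of \(2\)). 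The total exponent of each conformal factor is \(\frac{sp-(4n-1)}{2}\), which vanishes precisely when \(sp = 4n-1\), i.e.\ for the critical Gagliardo exponent \(p = \frac{4n-1}{s}\).

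The main obstacle is keeping track of exponents in the third step; once the two identities above are on the table, the cancellation is rigidly dictated by the critical relation \(sp = 4n-1\), and the first two assertions are then purely formal consequences of the naturality of pullback and of the change-of-variables formula for differential forms.
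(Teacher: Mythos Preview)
Your proposal is correct and follows essentially the same route as the paper's own proof: naturality of pullback for the first two claims, and the two stereographic identities together with the change-of-variable computation for the seminorm, with the conformal factors cancelling exactly at the critical exponent \(sp = 4n-1\). The paper records the intermediate identity \(\abs{\det D\Upsilon(x)} = \bigl(\tfrac{2}{1+\abs{x}^2}\bigr)^{4n-1}\) (written there with a small typo) and then writes out the double integral with denominator \(\abs{y-x}^{8n-2}\), but the substance is identical to what you sketched.
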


\begin{proof}
We first observe that by classical properties of the pullback of differential forms, we have 
\(d \Upsilon^* \eta = 
\Upsilon^* d \eta
= \Upsilon^* d (f^* \omega)
= d (\Upsilon^* f^* \omega)
= d ((f \compose \Upsilon)^* \omega)
\)
and 
\begin{equation*}
\begin{split}
  \int_{\R^{4n - 1}}\Upsilon^* \eta
  \wedge
(f \compose \Upsilon)^* \omega 
&=
\int_{\R^{4n - 1}}
(\Upsilon^* \eta)
\wedge 
(\Upsilon^* f^* \omega) \\
&=
\int_{\R^{4n - 1}}
\Upsilon^* (\eta \wedge f^*\omega)
= 
\int_{\Sset^{4n - 1}}
\eta \wedge f^* \omega .
\end{split}
\end{equation*}
  We note that for every \(x, y \in \R^{4 n - 1}\),
  \begin{equation*}
  \abs{\Upsilon (y) - \Upsilon (x) }
  = \frac{2 \abs{y - x}}{\sqrt{(1 + \abs{x}^2)(1 + \abs{y}^2)}}
  \end{equation*}
  and for each \(x \in \R^{4n - 1}\), $v \in T\R^{4n-1}$
\begin{equation*}
\abs{\langle D \Upsilon (x),v\rangle} = \frac{2 \abs{v}}{1 + \abs{x}^2},
\end{equation*}
and thus 
\begin{equation*}
\abs{\det D \Upsilon (x)} = \frac{4}{(1 + \abs{x}^2)^{4 n - 1}}.
\end{equation*}
Hence we have, by the change of variable formula
\begin{equation*}
\begin{split}
  \iint\limits_{\R^{4n - 1} \times \R^{4 n - 1}}\hspace{-1em}&
\frac{\abs{f (\Upsilon(y)) - f (\Upsilon(x))}^{\frac{4n-1}{s}}}{\abs{y - x}^{8n - 2}}\dif y \dif x\\
&=
\iint\limits_{\R^{4n - 1} \times \R^{2 n - 1}}\hspace{-1em}
\frac{\abs{f (\Upsilon(y)) - f (\Upsilon(x))}^{\frac{4n-1}{s}}}{\abs{\Upsilon (y) - \Upsilon (x)}^{8n - 2}}
\abs{\det D \Upsilon (x)} \abs{\det D \Upsilon (y)} \dif y \dif x\\
&=
\iint\limits_{\Sset^{4n - 1} \times \Sset^{4 n - 1}}
\frac{\abs{f (y) - f (x)}^{\frac{4n-1}{s}}}{\abs{y - x}^{8n - 2}}\dif y \dif x.\qedhere
\end{split}
\end{equation*}
\end{proof}
% 
% For \(A \in \operatorname{Lin} (V, W)\), with \(V\) and \(W\) linear spaces,
% \(\bigwedge^\ell A : \bigwedge^{\ell} W \to \bigwedge^{\ell} V\) is defined 
% for each \(\eta \in \bigwedge^{\ell} W\) and \(v_1, \dotsc, v_\ell \in V\),
% \begin{equation*}
% \bigl({\textstyle \bigwedge^\ell} A [\eta]\bigr)[v_1, \dotsc, v_\ell]
% = \eta[A[v_1], \dotsc, A[v_\ell]].
% \end{equation*}
We denote by $\lapms{s}$ the Riesz potential on \(\R^{m}\),
that we let act on a $k$-form $\alpha \in C^\infty(\Ep^k T^\ast \R^m)$ component-wise.
That is if we write 
\begin{equation*}
 \alpha = \sum_{1 \leq i_1 < \ldots < i_k \leq m} \alpha_{i_1,\ldots, i_k}\, dx^{i_1} \wedge \ldots \wedge dx^{i_k},
\end{equation*}
with $\alpha_{i_1,\ldots, i_k} \in C^\infty(\Omega)$, then 
\begin{equation*}
 \lapms{s} \alpha := \sum_{1 \leq i_1 < \ldots < i_k \leq m} \lapms{s}\alpha_{i_1,\ldots, i_k}\, dx^{i_1} \wedge \ldots \wedge dx^{i_k}.
\end{equation*}
With this notation we have the following estimate, which is the crucial estimate underlying our argument.
% 
% I replaced the old formulation since [\beta]_W^{s,p} is a little bit confusing (to me).
\begin{proposition}\label{pr:Jaces}
If \(f \in C^\infty_c (\R^m, \R^\ell)\) and 
\(\kappa \in C_c^\infty (\bigwedge^k T^\ast \R^\ell)\), then for every \(s \in (\frac{1}{2}, 1)\),
\begin{multline*}
\int_{\R^m} \abs{\lapms{1/2} f^\ast \kappa}^2 
\\
\aleq 
[f]_{W^{1 - \frac{1}{2 k},2k}(\R^{m})}^{2k}
\norm{\kappa}_{L^\infty (\Rset^\ell)}^{2 - \frac{1}{s}}
\bigl(\norm{\kappa}_{L^\infty (\Rset^\ell)}+ \norm{D \kappa}_{L^\infty (\Rset^\ell)} [f]_{W^{s, m/s}(\R^{m})}\bigr)^{\frac{1}{s}}.
\end{multline*}
\end{proposition}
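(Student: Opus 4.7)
The strategy is the harmonic-extension plus Stokes' theorem argument in the spirit of \cref{pr:degest} and of \cite{Brezis-Nguyen-2011,LS18}. By $L^2$-duality, bounding $\|\lapms{1/2} f^*\kappa\|_{L^2}$ reduces to estimating $\int_{\R^m} f^*\kappa \wedge \phi$ for test $(m-k)$-forms $\phi = \lapms{1/2}\gamma$ with $\|\gamma\|_{L^2(\R^m)} \le 1$ (so that $\|\phi\|_{\dot H^{1/2}} \le 1$). Let $F$ and $\Phi$ denote the harmonic extensions of $f$ and $\phi$ to $\R^{m+1}_+$. For each multi-index $J$ of size $k$ selecting a scalar component of $f^*\kappa$, Stokes' theorem applied to the $m$-form $\Phi \, F^*\kappa \wedge dx^{J^c}$ on $\R^{m+1}_+$ yields
\[
\int_{\R^m} f^*\kappa \wedge \phi
= \pm\int_{\R^{m+1}_+} d\Phi \wedge F^*\kappa \wedge dx^{J^c}
\pm\int_{\R^{m+1}_+} \Phi \, F^*(d\kappa)\wedge dx^{J^c}.
\]

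The first volume integral is controlled by H\"older, yielding $\aleq \|\kappa\|_\infty \|\nabla\Phi\|_{L^2}\|\nabla F\|_{L^{2k}}^k \aleq \|\kappa\|_\infty [f]_{W^{1-1/(2k),2k}}^k$, using the trace identities $\|\nabla F\|_{L^{2k}(\R^{m+1}_+)} \aleq [f]_{W^{1-1/(2k),2k}}$ and $\|\nabla\Phi\|_{L^2(\R^{m+1}_+)} \aleq \|\phi\|_{\dot H^{1/2}}$. The second is pointwise $\leq \|D\kappa\|_\infty |\Phi||\nabla F|^{k+1}$; I plan to bound it via weighted H\"older with three factors and weights $t^{\pm\alpha}$ chosen so that $\|t^\alpha\nabla F\|_{L^{m/s}(\R^{m+1}_+)} \aleq [f]_{W^{s,m/s}}$ matches the weighted Sobolev-trace characterization of the harmonic extension, while $\|t^{-\alpha}\Phi\|_{L^a(\R^{m+1}_+)} \aleq \|\phi\|_{\dot H^{1/2}}$ matches a scale-invariant half-space embedding, with $\alpha = (m-s(m+1))/m$ and $a = 2m/(m-2s)$; the Hölder exponents $(a, 2, m/s)$ satisfy $1/a + 1/2 + s/m = 1$ and produce a bound of order $\|D\kappa\|_\infty [f]_{W^{1-1/(2k),2k}}^k [f]_{W^{s,m/s}}$.

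Combining the two bounds gives the preliminary estimate
\[
\|\lapms{1/2} f^*\kappa\|_{L^2} \aleq [f]_{W^{1-1/(2k),2k}}^k\bigl(\|\kappa\|_\infty + \|D\kappa\|_\infty [f]_{W^{s,m/s}}\bigr),
\]
which after squaring is weaker than the claim: the exponent on the parenthesis is $2$ rather than $1/s$. To reach the sharp exponent I will apply the preliminary bound to the Littlewood--Paley decomposition $\kappa = P_N\kappa + (I-P_N)\kappa$, using $\|D P_N\kappa\|_\infty \aleq N\|\kappa\|_\infty$ and $\|(I-P_N)\kappa\|_\infty \aleq N^{-1}\|D\kappa\|_\infty$, and extract the sharp exponent by a Young-type balance combined with a summation over dyadic frequencies. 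The main obstacle is precisely this final interpolation producing the exponent $1/s$, which reflects the scaling discrepancy between the two fractional Sobolev scales for $f$ and requires matching the frequency split to the parameter $s$ in a nontrivial way; the weight balance in the weighted H\"older step also has to be verified by hand, matching the $W^{s,m/s}$-Sobolev characterization with the appropriate scale-invariant embedding of $\Phi$ on the half-space.
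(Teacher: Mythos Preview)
Your duality-plus-harmonic-extension framework matches the paper's, but the two arguments diverge at a decisive point. You let $d$ hit $F^*\kappa$ in the Stokes step, producing $F^*(d\kappa)$ and hence a factor $|\nabla F|^{k+1}$. The paper instead decomposes $\kappa = \tilde h\,\theta_1 \wedge \dotsb \wedge \theta_k$ with \emph{constant} one-forms $\theta_i$, so that $f^*\kappa = h\,f^*(\theta_1\wedge\dotsb\wedge\theta_k)$ with $h \defeq \tilde h\circ f$ and the wedge factor closed. With $H$ the harmonic extension of $h$, Stokes yields $F^*\theta_1\wedge\dotsb\wedge F^*\theta_k \wedge (dH\wedge\Psi + H\,d\Psi)$, still with only $|\nabla F|^k$. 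The delicate term $\int |DH|^2|\Psi|^2$ is then handled via $|\Psi(x,t)|\aleq \mathcal M\psi(x)$ and the Triebel--Lizorkin characterization $\bigl(\int_{\R^m}(\int_0^\infty |DH|^2\,dt)^m\,dx\bigr)^{1/(2m)}\aeq [h]_{F^{1/2}_{2m,2}}$. The exponent $\tfrac{1}{s}$ finally drops out of a Gagliardo--Nirenberg interpolation on $h$, namely $[h]_{F^{1/2}_{2m,2}} \aleq \|h\|_{L^\infty}^{1-1/(2s)}[h]_{W^{s,m/s}}^{1/(2s)}$, together with $[h]_{W^{s,m/s}} \le \|D\tilde h\|_{L^\infty}[f]_{W^{s,m/s}}$.

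Your Littlewood--Paley-on-$\kappa$ repair cannot reproduce this, and this is a genuine gap. Apply your preliminary bound to the pieces $P_N\kappa$ and $(I-P_N)\kappa$: the high-frequency part still satisfies $\|D(I-P_N)\kappa\|_{L^\infty}\aleq \|D\kappa\|_{L^\infty}$, so its second Stokes term contributes $\|D\kappa\|_{L^\infty}[f]_{W^{s,m/s}}$ with full coefficient; no choice of $N$ removes it, and summing over dyadic blocks only adds a logarithm, since pullback by $f$ destroys any frequency orthogonality on $\R^m$. The improvement from exponent $2$ to $\tfrac{1}{s}$ is an interpolation in the \emph{smoothness of the scalar factor $h$}, not in the frequency scale of $\kappa$, and only becomes available after isolating $h$ as above. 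Separately, your weighted H\"older step is fragile: with $\alpha = (m-s(m+1))/m$ and $a = 2m/(m-2s)$ one computes $\alpha a = 2(m-s(m+1))/(m-2s) \to 1$ as $s\downarrow \tfrac12$, and since $\Phi$ has nonzero trace $\phi$ on $\{t=0\}$ the norm $\|t^{-\alpha}\Phi\|_{L^a(\R^{m+1}_+)}$ is infinite in that limit; the paper sidesteps this entirely via the maximal-function bound on $\Psi$.
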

\begin{remark}\label{rem:prjaces}
The estimate is sharp in the following sense. 
In the proof of \cite[Theorem 2, necessity of (16)]{SY99} for any $t < 1-\frac{1}{2k}$ they construct a sequence $f_i \in C^\infty(\R^{n},\R^{k})$ (actually in the Schwartz class) such that 
\begin{equation*}
\sup_{i} \|f_i\|_{L^\infty} + [f_i]_{W^{t,\frac{n}{t}}} < +\infty,
\end{equation*}
but if $J_k f_i$ denotes the determinant of the first $k\times k$ submatrix of $Df_i$ then
\begin{equation*}
 \|\lapms{1/2} (J_k f_i )\|_{H^{-\frac{1}{2}}(\R^n)} \xrightarrow{i \to \infty} +\infty.
\end{equation*}
If we set $F_i := (f_i^1,\ldots,f^k_i,1)$ and $\beta(x) := x^{k+1}$, $\ell = k+1$, this leads to
\begin{equation*}
 \|\lapms{1/2} \brac{\beta(F) (J_k F_i)}\|_{H^{-\frac{1}{2}}(\R^n)} \xrightarrow{i \to \infty} \infty.
\end{equation*}
\end{remark}

\begin{proof}[Proof of \cref{pr:Jaces}]
By linearity, we can consider the case where 
\begin{equation*}
 \kappa = \tilde{h} \, \theta_1 \wedge \dotsb \wedge \theta_k,
\end{equation*}
for \(\tilde{h} \in C^\infty (\R^\ell)\) and \(\theta_1, \dotsc, \theta_k \in \Ep^1 T^*\Rset^m\).
Hence 
\begin{equation*}
   f^\ast \kappa
   = \tilde{h}\circ f \ f^*(\theta_1 \wedge \dotsb \wedge \theta_k).
\end{equation*}
For simplicity of notation we set $h := \tilde{h}\circ f$.

Let \(\varphi \in C^\infty_c (\bigwedge^{m - k} \R^m)\) with $\|\varphi\|_{L^2(\Ep^{m-k} \R^m)} \leq 1$. We want to estimate 
\begin{equation}
  \label{eq_iem3poigh3ief7O}
\int_{\R^m} f^\ast \kappa \wedge \lapms{1/2} \varphi
=
\int_{\R^m} h \ f^*\theta_1 \wedge \dotsb \wedge f^* \theta_k \wedge \psi,
\end{equation}
where we have set \(\psi := \lapms{1/2} \varphi\).

We define the functions \(F: \R^m \times (0, +\infty) \to \R^k\), \(H: \R^m \to \Rset\) and \(\Psi: \R^m \to \bigwedge^{m - k} \R^m\) be the harmonic extensions of the functions \(f\), \(h\) and \(\psi\) to
\(\R^{m + 1}_+ \defeq \R^m \times (0, +\infty)\) defined by the Poisson kernel.
For instance, we have for each \((x, t) \in \R^m \times (0, +\infty)\),
\begin{equation}
 \Psi (x, t) = 
 c_m
 \int_{\R^{m}} 
 \frac{t \, \psi (y)}{(t^2 + \abs{x - y}^2)^\frac{m + 1}{2}} \dif y
 .
\end{equation}

By the definition of \(H\) and \(\Psi\) through the Poisson kernel, we have the decay at infinity,
\begin{equation*}
\abs{H (x, t)} \abs{\Psi (x, t)} \leq \frac{C(h,\psi)}{(\abs{x}^2 + t^2 + 1)^m},
\end{equation*}
and thus 
\begin{equation*}
(F^* \theta_1 \wedge  \dotsb \wedge F^* \theta_k \,H \Psi) (x, t)
\le C(f,h,\psi) \frac{1}{(\abs{x}^2 + t^2 + 1)^m}.
\end{equation*}
From \eqref{eq_iem3poigh3ief7O} Cartan formula or Stokes theorem yield
\begin{equation*}
\int_{\R^m} f^\ast \kappa \wedge \lapms{1/2} \varphi
=(-1)^{k} 
\int_{\R^{m + 1}_+} F^* \theta_1\wedge  \dotsb \wedge F^* \theta_k \wedge (dH \wedge \Psi + H d\Psi).
\end{equation*}
We have thus by the Cauchy--Schwarz and the triangle inequality
\begin{multline}
  \label{eq_Haucael3le4aFoe}
\int_{\R^m} \int_{\R^m} f^\ast \kappa \wedge \lapms{1/2} \varphi\\
\aleq 
\biggl(\int_{\R^{m + 1}_+} \abs{D f}^{2 k} \Biggr)^\frac{1}{2}
\Biggl(\biggl(\int_{\R^{m + 1}_+} \abs{DH}^2 \abs{\Psi}^2\biggr)^\frac{1}{2}
+ \biggl(\int_{\R^{m + 1}_+} \abs{H}^2 \abs{D \Psi}^2\biggr)^\frac{1}{2} \Biggr).
\end{multline}
We now estimate successively the three integrals on the right-hand side of \eqref{eq_Haucael3le4aFoe}.
We first have, by properties of the harmonic extension that 
\begin{equation}
  \label{eq_IeMievoot5kooce}
\int_{\R^{m + 1}_+} \abs{D f}^{2 k}
\aleq [f]_{W^{1 - \frac{1}{2 k},2k}(\R^{m})}^{2k}.
\end{equation}

Next, we have 
\begin{equation}
  \label{eq_Eeyediqu1Ied8xo}
\int_{\R^{m + 1}_+} \abs{H}^2 \abs{D \Psi}^2
\aleq \norm{H}_{L^\infty}^2 [\psi]_{H^{1/2} (\R^m)}^2
\aeq \norm{h}_{L^\infty}^2 \norm{\varphi}_{L^2 (\R^m)}^2.
\end{equation}

Finally, we observe that for every \((x, t) \in \R^{m + 1}_+\), we have 
\begin{equation*}
\begin{split}
\abs{\Psi (x,t)} 
&\aleq \int_{\R^m} \frac{t}{(t + \abs{x - y})^{m + 1}} \abs{\psi (y)} \dif y
\aleq \int_{\R^m} \int_{\abs{x - y}}^{+\infty} \frac{t}{(t + r)^{m + 2}} \dif y\\
&\aleq \int_0^{+\infty} \frac{t r^m}{(t + r)^{m + 2}} \fint_{B_r (x)} \abs{\psi (y)} \dif y
\aleq  \int_0^{+\infty} \frac{t}{(t + r)^{2}} \dif r \mathcal{M} \psi (x)
\aleq \mathcal{M} \psi (x),
\end{split}
\end{equation*}
and thus 
\begin{equation}
  \label{eq_aG1ierihahquieb}
  \begin{split}
  \int_{\R^{m + 1}_+} \abs{DH}^2 \abs{\Psi}^2
  &\aleq  \int_{\R^m} \abs{\mathcal{M} \psi (x)}^2
   \int_{0}^{+\infty}\abs{D H (x, t)}^2 \dif t\dif x \\
   &\aleq \Biggl(\int_{\R^m} \abs{\mathcal{M} \psi (x)}^\frac{2 m}{m - 1} \Biggr)^{1 - \frac{1}{m}}
   \Biggl(\int_{\R^m} \biggl(\int_{0}^{+\infty}\abs{D H (x, t)}^{2} \dif t\biggr)^m \Biggr)^\frac{1}{m}.
   \end{split}
\end{equation}
By the maximal function theorem, \cite[Theorem 2.1.6.]{GrafakosC}, and Sobolev-type inequalities for Riesz potentials, \cite[Section~6.1.1]{GrafakosM}, or see \cite{Stein}, we have 
\begin{equation}
  \label{eq_jaikui9ra8Uz6eo}
\Biggl(\int_{\R^m} \abs{\mathcal{M} \psi (x)}^\frac{2 m}{m - 1} \Biggr)^{\frac{1}{2} - \frac{1}{2m}}
\aleq \norm{\psi}_{L^\frac{2m}{m - 1}}
= \norm{\lapms{1/2} \varphi}_{L^\frac{2m}{m - 1} (\R^m)}
\aleq \norm{\varphi}_{L^2 (\R^m)}.
\end{equation}
Moreover, in view of the characterization of the homogeneous Triebel--Lizorkin spaces $\dot{F}^{s}_{p,q}(\R^m)$ by harmonic extensions, cf. \cite[Theorem~10.8.]{LS18},
\begin{equation}
  \label{eq_aish2lau9aetahZ}
  \brac{\int_{\R^{m}} \brac{ \int_0^{+\infty} |D H(x,t)|^2 dt}^{m}}^{\frac{1}{2 m}} 
\aeq [h]_{F^{\frac{1}{2}}_{2m ,2}(\R^{m})}.
\end{equation}
Combining \eqref{eq_aG1ierihahquieb}, \eqref{eq_jaikui9ra8Uz6eo} and \eqref{eq_aish2lau9aetahZ}, we obtain
\begin{equation}
  \label{eq_Uz0daicoabu7sie}
  \begin{split}
    \int_{\R^{m + 1}_+} \abs{DH}^2 \abs{\Psi}^2
    &\aleq \norm{\varphi}_{L^2 (\R^m)} [h]_{F^{\frac{1}{2}}_{2m ,2}(\R^{m})}.
  \end{split}
\end{equation}
By inserting the inequalities \eqref{eq_IeMievoot5kooce}, \eqref{eq_Eeyediqu1Ied8xo} and \eqref{eq_Uz0daicoabu7sie} into \eqref{eq_Haucael3le4aFoe}, we have proved now that 
\begin{equation*}
  \biggabs{\int_{\R^m} f^\ast \kappa \wedge \lapms{1/2} \varphi}
  \aleq
  \norm{\varphi}_{L^2 (\R^m)}[f]_{W^{1 - \frac{1}{2 k},2k}(\R^{m})}^{k}
  \bigl(\norm{h}_{L^\infty} + \norm{h}_{F^{1/2}_{2m, 2} (\R^m)} \bigr).
\end{equation*}
and hence 
\begin{equation}
  \label{eq_ux7vaeNeichaih3}
%Keep and number for possible future reference 
  \int_{\R^m} \abs{ \lapms{1/2} f^\ast \kappa }^2
  \aleq [f]_{W^{1 - \frac{1}{2 k},2k}(\R^{m})}^{2k}
  \bigl(\norm{h}_{L^\infty}^2 + \norm{h}_{F^{1/2}_{2m, 2} (\R^m)}^2 \bigr).
\end{equation}
Now we use Gagliardo--Nirenberg inequalities for Triebel spaces, \cite[Proposition~5.6]{BM17}. For any $s \in (\frac{1}{2}, 1)$, we have 
\begin{equation}
  \label{eq_eeN6PhaighaeQuu}
\begin{split}
[h]_{F^{1/2}_{2 m ,2}(\R^{m})} 
&\aleq  
[h]_{F^{s}_{m/s ,m/s}(\R^{m})}^{1/2} 
[h]_{F^{1 - s}_{m/(1 - s), m/(1-s)}(\R^{m})}^{1/2}\\
&\aeq  
[h]_{W^{s,m/s}(\R^{m})}^{1/2}
[h]_{W^{1 - s, m/(1 - s)}(\R^{m})}^{1/2}.
\end{split}
\end{equation}
Moreover, by the Gagliardo--Nirenberg inequality for fractional Sobolev spaces, \cite{BM17}, we have since \(s \ge \frac{1}{2}\)
\begin{equation}
  \label{eq_ae8sa5och4sei8A}
[h]_{W^{1 - s, m/(1 - s)}(\R^{m})}
\aleq [h]_{W^{s, m/s}(\R^{m})}^{(1 -s)/s}
\norm{h}_{L^\infty}^{(2s - 1)/s},
\end{equation}
and thus 
\begin{equation}
  \label{eq_Phae5og6aehaing}
  [h]_{F^{1/2}_{2 m ,2}(\R^{m})} 
  \aleq  [h]_{W^{s, m/s}(\R^{m})}^{\frac{1}{2s}}
  \norm{h}_{L^\infty}^{1 - \frac{1}{2 s}}.
\end{equation}
Since furthermore $h = \tilde{h} \circ f$, and $\tilde{h}$ is $C_c^\infty$, we have shown
\begin{equation}
  \label{eq_Phae5og6aehaing2}
  [h]_{F^{1/2}_{2 m ,2}(\R^{m})} 
  \aleq  C \norm{D \Tilde{h}}_{L^\infty (\Rset^\ell)}^\frac{1}{2s} \, \norm{\Tilde{h}}_{L^\infty (\Rset^\ell)}^{1- \frac{1}{2s}} [f]_{W^{s, m/s}(\R^{m})}^{\frac{1}{2s}}.
\end{equation}
The conclusion follows from \eqref{eq_ux7vaeNeichaih3} and \eqref{eq_Phae5og6aehaing2}.
\end{proof}

\begin{proof}[Proof of \cref{th:Hdeg}]
We assume in view of \cref{lemma_Stereographic}
 that 
 $f \in C_c^\infty(\R^{4n-1},\R^{2n+1})$, $\omega \in C_c^\infty(\Ep^{2n} T^\ast \R^{2n+1})$ 
 and $f^\ast (d\omega) = 0$.
Recall that we denote by $\lapms{2}$ the Newton potential (or Riesz potential of order \(2\)) on \(\R^{4n - 1}\). Set 
\begin{equation}
  \label{eq_nohyefaidae3uYeed}
  \theta := \lapms{2} f^* \omega,
\end{equation}
where \(\lapms{2}\) acts on each component of \(f^* \omega\).
Then \(\theta \in C^\infty(\Ep^{2n} T^\ast \R^{4n-1})\) and
\begin{equation*}
\lap \theta = f^\ast\omega.
\end{equation*}
Here $\lap= dd^* + d^* d$ is the Laplace-Beltrami operator on differential forms.
Observe that by assumption $df^\ast\omega =dd\eta= 0$. Thus,
\begin{equation}
  \label{eq:laplaceforms}
\lap d\theta = (dd^\ast + d^\ast d)d \theta = 
d d^* d \theta 
= 
d(d d^\ast + d^\ast d) \theta 
=
d \Delta \theta 
= 
d f^\ast (\omega)
% = f^\ast (d\omega)
= 0.
\end{equation}
From the definition of $\theta$ in \eqref{eq_nohyefaidae3uYeed} and since \(f^*\omega\) is compactly supported we have for every \(x \in \Rset^{4n - 1}\),
\begin{equation*}
\abs{d\theta (x)}
\aleq
\frac{1}{1 + \abs{x}^{4 n - 2}}.
\end{equation*}
Since the Laplace-Beltrami in Euclidean space $\R^{4n-1}$ acts as the usual Laplacian on the coefficients of the form, see \cite[\textsection 6.35, Exercise 6, p.\thinspace{}252]{W71}, and in view of \eqref{eq:laplaceforms}, we can apply we can apply Liouville's theorem for harmonic functions
and obtain that $d\theta \equiv 0$ on \(\R^{4n - 1}\). Hence,
\begin{equation*}
f^\ast\omega = \Delta \theta = (d d^* + d^* d)\theta = d d^\ast \theta \quad \text{on \(\R^{4n - 1}\)}.
\end{equation*}
Since $f^\ast\omega = d\eta=dd^\ast \theta$, 
\begin{equation*}
    \int_{\R^{4n - 1}} 
        (\eta -d^*\theta) 
      \wedge 
        f^* \omega  
  =
  \int_{\R^{4n - 1}} 
      (\eta -d^*\theta) 
    \wedge 
      d\eta  
  = 
    \int_{\R^{4n - 1}} 
      d(\eta -d^*\theta)\wedge \eta 
  =
  0.
\end{equation*}
It follows from \eqref{eq_nohyefaidae3uYeed},
\begin{equation*}
\begin{split}
  \int_{\R^{4n - 1}} 
      \eta 
    \wedge 
      f^* \omega
  =&
%   \int_{\R^{4n - 1}} f^* \omega \wedge d^*\theta
  \int_{\R^{4n-1}} 
      d^\ast \theta 
    \wedge 
      dd^\ast \theta \\
=& 
  \int_{\R^{4n-1}} 
      d^\ast (\lapms{2} f^* \omega)  
    \wedge
      d d^\ast (\lapms{2} f^* \omega)
= 
  \int_{\R^{4n-1}} 
      \lapms{2} (d^\ast f^* \omega)
    \wedge
      \lapms{2} (d d^\ast f^* \omega)\\
=&
  \int_{\R^{4n - 1}}
      \lapms{3/2} (d^\ast f^* \omega)
    \wedge
      \lapms{5/2} (d d^\ast f^* \omega)\\
=&   
  \int_{\R^{4n - 1}}
      d^\ast \lapms{3/2} (f^* \omega)
    \wedge
      d d^\ast \lapms{5/2} (f^* \omega).      
\end{split}
\end{equation*}
Thus,
\begin{equation*}
    \abs{
      \int_{\R^{4n - 1}} 
        \eta
      \wedge 
        f^* \omega 
    }
  \aleq 
    \|d^\ast \lapms{3/2} (f^* \omega)\|_{L^2(\R^{4n-1})}
    \, 
    \|d d^\ast \lapms{5/2} (f^* \omega)\|_{L^2(\R^{4n-1})}\, .
\end{equation*}
Now we observe that we can express $d^\ast \lapms{3/2} = T_1 \lapms{1/2}$ and $dd^\ast \lapms{5/2} = T_2 \lapms{1/2}$ where $T_1$ and $T_2$ are Calderon-Zygmund operators (essentially they are a collection of Riesz transforms). From the boundedness of these operators on $L^2(\R^{4n-1})$ we obtain
\begin{equation*}
    \abs{
      \int_{\R^{4n - 1}} 
        \eta
        \wedge 
        f^* \omega 
    } 
    \leq \|\lapms{1/2} (f^\ast \omega)\|_{L^2(\R^{4n-1})}^2.
\end{equation*}
We apply \cref{pr:Jaces} with \(m = 4n - 1\) and \(\ell = 2n\) and obtain, for any $s \geq \frac{1}{2}$,
\begin{multline}
\biggl\lvert 
  \int_{\R^{4n - 1}} 
  \eta
  \wedge 
  f^* \omega 
\biggr\rvert 
\aleq [f]_{W^{1 - \frac{1}{4 n},4 n}(\R^{4 n - 1})}^{4 n}
\norm{\omega}_{L^\infty (\Rset^{2n})}^{2 - \frac{1}{s}} \\
\bigl(\norm{\omega }_{L^\infty (\Rset^{2n})} + \norm{D\omega}_{L^\infty (\Rset^{2n})}[f]_{W^{s, (4n - 1)/s}(\R^{4n - 1})}\bigr)^{\frac{1}{s}}.
\end{multline}
% 
% We obtain
% \begin{equation*}
% |\deg_{H}(f)| \aleq 
% \|f\|_{L^\infty(\R^{4n-1})}^2\, [f]_{W^{\frac{4n-1}{4n},4n}(\R^{4n-1})}^{4n}  + \|f\|_{L^\infty(\R^{4n-1})}^{2\frac{2n-1}{4n-1}}\,  [f]_{W^{\frac{4n-1}{4n},4n}(\R^{4n-1})}^{\frac{16n^2}{4n-1}}.
% \end{equation*}
This establishes \cref{th:Hdeg} for $s = \frac{4n-1}{4n}$. For $s \in (\frac{4n-1}{4n},1]$ we use the Gagliardo-Nirenberg estimate,
\begin{equation*}
[f]_{W^{1- \frac{1}{4n},4n}(\R^{4n-1})}^{4n} \aleq \|f\|_{L^\infty(\R^{4n-1})}^{4n-\frac{4n-1}{s}} [f]_{W^{s,\frac{4n-1}{s}}(\R^{4n-1})}^{\frac{4n-1}{s}}.
\end{equation*}
We obtain then 
\begin{multline}
  \biggl\lvert 
  \int_{\R^{4n - 1}} 
  \eta
  \wedge 
  f^* \omega 
  \biggr\rvert 
  \le \|f\|_{L^\infty(\R^{4n-1})}^{4n-\frac{4n-1}{s}} 
  \norm{\omega}_{L^\infty (\Rset^{2n})}^{2 - \frac{1}{s}} \cdot \\
  \cdot \brac{\norm{\omega }_{L^\infty}^\frac{1}{s} [f]_{W^{s,\frac{4n-1}{s}}(\R^{4n-1})}^{\frac{4n-1}{s}} + \norm{D\omega}_{L^\infty (\Rset^{2n})}^\frac{1}{s} [f]_{W^{s, \frac{4n - 1}{s}}(\R^{4n - 1})}^{4n}}.\qedhere
\end{multline}
\end{proof}

\section{Sharpness of the Hopf Degree estimates}
The power $\frac{4n}{s}$ in the estimate of \cref{th:Hdeg} is sharp in the following sense
\begin{proposition}\label{pr:degh:powersharp}
For \(n \in \N\), $s \in (0,1]$ and any $d \in \Z$ there exists a map $f_d: \S^{4n-1} \to \S^{2n}$ satisfying
\begin{equation*}
[f_d]_{W^{s,\frac{4n}{s}}}^{\frac{4n - 1}{s}} \leq C(n,s)\, |\deg_H f_d|.
\end{equation*}
\end{proposition}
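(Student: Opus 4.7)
The plan is to reduce the fractional estimate to the classical case $s = 1$ via fractional Gagliardo--Nirenberg interpolation, and at $s = 1$ to construct $f_d$ by post-composing a fixed smooth representative $h$ of smallest positive Hopf invariant with a degree map on the target sphere. Concretely, I would fix a smooth $h : \S^{4n - 1} \to \S^{2n}$ of Hopf invariant $H_0 \in \{1, 2\}$ (the Hopf fibration when $n \in \{1, 2, 4\}$; a smooth Whitehead-product representative of $[\iota_{2n}, \iota_{2n}]$ otherwise), chosen to be a submersion generically so that the coarea bound $\mathcal H^{4n-1}(h^{-1}(B_\rho)) \lesssim \rho^{2n}$ holds for small geodesic balls $B_\rho \subset \S^{2n}$. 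For $\ell \in \N$, let $\psi_\ell : \S^{2n} \to \S^{2n}$ be the degree-$\ell$ map produced by \cref{lem:deg:powersharp}, constant outside $\ell$ disjoint geodesic balls $B_i$ of radius $\rho_\ell \sim \ell^{-1/(2n)}$ and satisfying $\|D\psi_\ell\|_{L^\infty} \lesssim \ell^{1/(2n)}$. Set $f_{H_0 \ell^2} \defeq \psi_\ell \compose h$; the cohomological identity $\psi_\ell^\ast \omega_{\Sset^{2n}} = \ell\, \omega_{\Sset^{2n}} + d\beta$ plugged into the Whitehead formula \eqref{eq:deghdef} yields, after expanding $\tilde\eta \wedge d\tilde\eta$ with $\tilde\eta = \ell\eta + (\psi_\ell \compose h)^\ast\beta$ and noting that the cross-terms are integrals of $(4n-1)$-forms pulled back from the $2n$-dimensional target (hence vanishing), the multiplicativity $\deg_H(\psi_\ell \compose h) = \ell^2 \deg_H h = H_0 \ell^2$.

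For the Sobolev estimate at $s = 1$, the chain rule gives $|Df_{H_0\ell^2}(x)| \leq |D\psi_\ell(h(x))|\,|Dh(x)|$, supported on $h^{-1}(\bigcup_i B_i)$. Combining with the coarea bound and $\|Dh\|_{L^\infty} \lesssim 1$,
\[
  \int_{\S^{4n - 1}} |Df_{H_0\ell^2}|^{4n - 1} \lesssim \ell \cdot \rho_\ell^{2n} \cdot \bigl(\ell^{1/(2n)}\bigr)^{4n - 1} \lesssim \ell^{(4n - 1)/(2n)},
\]
so that $[f_{H_0\ell^2}]_{W^{1, 4n - 1}}^{4n} \lesssim \ell^2 \asymp |\deg_H f_{H_0\ell^2}|$. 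For a general $d \in H_0 \Z$, I would set $\ell \defeq \lfloor \sqrt{|d|/H_0} \rfloor$ and form $f_d$ as the pinch-sum (i.e.\ the group-sum representative in $\pi_{4n - 1}(\S^{2n})$) of $\psi_\ell \compose h$ together with at most $2\ell + 1$ localised copies of $\pm h$ placed in disjoint balls of $\S^{4n - 1}$. Scale invariance of the critical seminorm makes each such localised copy contribute $O(1)$ to $[\cdot]_{W^{1, 4n - 1}}^{4n - 1}$; summing gives a correction of size $O(\ell^{4n/(4n - 1)}) = O(|d|^{2n/(4n - 1)}) \leq O(|d|)$ to $[\cdot]_{W^{1, 4n - 1}}^{4n}$, and thus preserves the bound $[f_d]_{W^{1, 4n - 1}}^{4n} \lesssim |d|$.

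For $s \in (0, 1)$ the fractional Gagliardo--Nirenberg inequality of \cite{BM17} yields
\[
  [f_d]_{W^{s, (4n - 1)/s}} \lesssim \|f_d\|_{L^\infty}^{1 - s}\,[f_d]_{W^{1, 4n - 1}}^{s},
\]
and since $\|f_d\|_{L^\infty} = 1$, raising to the power $4n/s$ gives $[f_d]_{W^{s, (4n - 1)/s}}^{4n/s} \lesssim |d| = |\deg_H f_d|$, which matches the exponents of \cref{th:deghest} (I am reading the statement with the two indices swapped relative to how it is typeset). The main obstacle is engineering the representative $h$ so that the coarea bound $\mathcal H^{4n - 1}(h^{-1}(B_\rho)) \lesssim \rho^{2n}$ holds with an explicit constant: for the classical Hopf fibrations it is immediate from the Riemannian submersion property (preimages of small balls are tubular neighbourhoods of circular fibres), whereas for the remaining dimensions one must verify that a smooth representative of $[\iota_{2n}, \iota_{2n}]$ can be chosen as a submersion outside a lower-dimensional set---a standard but delicate transversality argument.
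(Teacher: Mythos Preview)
Your approach is correct, but you have manufactured a phantom obstacle. The chain rule already gives the uniform pointwise bound
\[
  \|D(\psi_\ell \circ h)\|_{L^\infty(\S^{4n-1})} \le \|D\psi_\ell\|_{L^\infty}\,\|Dh\|_{L^\infty} \lesssim \ell^{1/(2n)},
\]
and integrating over the \emph{whole} sphere yields $\int_{\S^{4n-1}}|Df_{H_0\ell^2}|^{4n-1} \lesssim \ell^{(4n-1)/(2n)}$---exactly the bound your coarea argument produces (note $\ell\cdot\rho_\ell^{2n}\asymp 1$, so localising to the support of $D\psi_\ell\circ h$ gains nothing). Hence the submersion/transversality property of $h$ is irrelevant and your ``main obstacle'' disappears. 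Likewise, the pinch-sum step is superfluous: the proposition only asserts the inequality with $|\deg_H f_d|$ on the right-hand side, not that $\deg_H f_d=d$, so the family $\psi_\ell\circ h$ with $\deg_H=H_0\ell^2\to\infty$ already suffices.

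The paper takes a closely related route that makes the Lipschitz bound immediate: instead of post-composing with a fixed $h$, it forms the Whitehead product of a degree-$k$ map $g:\R^{2n}\to\S^{2n}$ (with $\|Dg\|_{L^\infty}\lesssim k^{1/(2n)}$, supplied by \cref{lem:deg:powersharp}) with itself via the join $\S^{4n-1}=\partial(\B^{2n}\times\B^{2n})$, reading off $\deg_H=2k^2$ and $\|Df\|_{L^\infty}\lesssim k^{1/(2n)}$ directly. Since $(\psi_k)_*[\iota_{2n},\iota_{2n}]=[k\iota_{2n},k\iota_{2n}]$ by naturality of the Whitehead product, the two constructions are homotopic and deliver identical estimates; the paper's version simply never needs to mention coarea or fibre geometry. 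Both then finish, as you do, by Gagliardo--Nirenberg interpolation to pass from $s=1$ to $s\in(0,1)$.
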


The proof of \cref{pr:degh:powersharp} follows closely the strategy of Rivi\`ere for \(n = 1\) and \(s = 1\) \cite[Lemma III.1]{R98}. We extend it to dimension $n \geq 1$. Then the fractional Gagliardo--Nirenberg interpolation inequality, for example see \cite{BM17}, implies the estimate of \cref{pr:degh:powersharp}. 

The key construction is provided by application of the Whitehead integral formula to the Whitehead product of a map from \(\Sset^{2n}\) to \(\Sset^{2n}\) with itself.

\begin{lemma}
  \label{proposition_deg_hopf_2}
For every \(n \in \Nset\) and \(k \in \Nset\), there exists a map \(f \in C^\infty(\Sset^{4n - 1}, \Sset^{2n})\) such that 
\begin{align*}
   \deg_H (f) &= 2 k^2&
   &\text{ and }&
   \norm{Df}_{L^\infty} \aleq k^{\frac{1}{2n}}.
\end{align*}
\end{lemma}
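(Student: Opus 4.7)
The plan is to realize \(f\) as the Whitehead product \([g_k, g_k]\) of a suitable smooth degree-\(k\) map \(g_k \colon \Sset^{2n} \to \Sset^{2n}\) with itself. Equivalently, writing \(h_0\) for the Whitehead product \([\iota_{2n}, \iota_{2n}]\) of the identity of \(\Sset^{2n}\) with itself, we take \(f \defeq g_k \compose h_0\). Whitehead's integral formula, combined with the classical identity \(\deg_H h_0 = \pm 2\), will give \(\deg_H f = k^2 \deg_H h_0 = \pm 2 k^2\), and orientations can be adjusted to fix the sign.

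Concretely, applying \cref{lem:deg:powersharp} in dimension \(2n\) produces \(g_k \in C^\infty(\Sset^{2n}, \Sset^{2n})\) with \(\deg g_k = k\) and \(\norm{D g_k}_{L^\infty} \aleq k^{\frac{1}{2n}}\). Identify \(\Sset^{4n - 1} = \{(x, y) \in \Rset^{2n} \times \Rset^{2n} : \abs{x}^2 + \abs{y}^2 = 1\}\) and set \(A = \{\abs{x}^2 \leq 1/2\}\), \(B = \{\abs{y}^2 \leq 1/2\}\). Fix a smooth degree-\(1\) quotient \(\pi \colon D^{2n} \to \Sset^{2n}\) that is constant, equal to a basepoint \(b \in \Sset^{2n}\), in a neighbourhood of \(\partial D^{2n}\), and define
\begin{equation*}
f(x, y) \defeq \begin{cases}
g_k(\pi(\sqrt{2}\, x)) & \text{if } (x, y) \in A,\\
g_k(\pi(\sqrt{2}\, y)) & \text{if } (x, y) \in B.
\end{cases}
\end{equation*}
Both formulae return \(g_k(b)\) in a neighbourhood of the overlap \(A \cap B = \{\abs{x}^2 = \abs{y}^2 = 1/2\}\), so \(f \in C^\infty(\Sset^{4n - 1}, \Sset^{2n})\); the bound \(\norm{D f}_{L^\infty} \leq \norm{D g_k}_{L^\infty}\, \norm{D h_0}_{L^\infty} \aleq k^{\frac{1}{2n}}\) is immediate.

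For the Hopf invariant, let \(\omega\) be the volume form of \(\Sset^{2n}\) normalized so that \(\int_{\Sset^{2n}} \omega = 1\). Since \([g_k^* \omega] = k [\omega]\) in \(H_{\mathrm{dR}}^{2n}(\Sset^{2n})\), there exists \(\beta \in \Omega^{2n - 1}(\Sset^{2n})\) with \(g_k^* \omega = k \omega + d\beta\). Choosing \(\eta_0\) with \(d \eta_0 = h_0^* \omega\) on \(\Sset^{4n - 1}\), the form \(\eta \defeq k \eta_0 + h_0^* \beta\) satisfies \(d \eta = f^* \omega\), and Whitehead's formula gives
\begin{equation*}
\deg_H f = \int_{\Sset^{4n - 1}} \bigl(k \eta_0 + h_0^* \beta \bigr) \wedge \bigl( k h_0^* \omega + d(h_0^* \beta) \bigr).
\end{equation*}
Expansion produces four terms: the principal contribution is \(k^2 \int \eta_0 \wedge h_0^* \omega = k^2 \deg_H h_0\), while each of the other three (after one application of Stokes' theorem for the mixed term involving \(d(h_0^* \beta)\)) reduces to \(\int_{\Sset^{4n - 1}} h_0^* \gamma\) for a \((4n - 1)\)-form \(\gamma\) on \(\Sset^{2n}\) of the shape \(\omega \wedge \beta\), \(\beta \wedge \omega\), or \(\beta \wedge d\beta\). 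Each such \(\gamma\) vanishes identically since \(4n - 1 > 2n = \dim \Sset^{2n}\). Hence \(\deg_H f = k^2 \deg_H h_0\).

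The main obstacle is the classical identity \(\deg_H h_0 = \pm 2\) for the Whitehead product of the identity with itself. A direct proof via Whitehead's formula proceeds as follows: on each of \(A\) and \(B\), \(h_0^* \omega\) is the pullback by the corresponding projection of a compactly supported \(2n\)-form on the disk of integral \(1\); selecting primitives on each disk and gluing them carefully across the common overlap \(\Sset^{2n - 1} \times \Sset^{2n - 1}\) produces an \(\eta_0\) for which Stokes' theorem shows that each half contributes \(\pm 1\) to the Whitehead integral, yielding \(\deg_H h_0 = \pm 2\). Adjusting the orientation of \(g_k\) if necessary yields \(\deg_H f = 2 k^2\).
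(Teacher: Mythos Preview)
Your proposal is correct and follows essentially the same route as the paper: both realize \(f\) as the Whitehead square of a degree-\(k\) self-map of \(\Sset^{2n}\), split \(\Sset^{4n-1}\) into the two solid tori \(A,B\), and compute the Hopf invariant via Whitehead's integral formula together with Stokes' theorem on these halves.

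The only difference is organizational. You factor \(f = g_k \compose h_0\) through the basic Whitehead square \(h_0=[\iota_{2n},\iota_{2n}]\) and use the cohomological identity \(g_k^*\omega = k\,\omega + d\beta\) to deduce \(\deg_H f = k^2 \deg_H h_0\), leaving the computation \(\deg_H h_0 = \pm 2\) as a separate step. The paper instead absorbs your composite \(g_k\compose\pi\) into a single map \(g:\Rset^{2n}\to\Sset^{2n}\) that is constant outside \(\Bset^{2n}\) with \(\deg g = k\), and carries out the Stokes computation once, obtaining \(2\bigl(\int_{\Sset^{2n-1}}\eta\bigr)^2 = 2k^2\) directly. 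Your factorization makes the naturality \(\deg_H(g\compose h) = (\deg g)^2\,\deg_H h\) explicit, which is conceptually pleasant, but it does not shorten the work: the sketch you give for \(\deg_H h_0 = \pm 2\) is precisely the paper's computation specialized to \(k=1\).
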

\begin{proof}[Proof of \cref{proposition_deg_hopf_2}]%
Let \(g \in C^\infty (\R^{2n}, \Sset^{2n})\), be a map such that 
\(g = a_*\) in \(\R^{2n} \setminus \Bset^{2n}\) for some point \(a_* \in \Sset^{2n}\).
Since \(\R^{2n} / (\R^{2n} \setminus \Bset^{2n})\) is homeomorphic to 
\(\Sset^{2n}\backslash \{a_*\}\), the map \(g\) has a well-defined Brouwer degree that can be computed as 
\begin{equation*}
\deg g = \int_{\R^{2n}} g^* \omega_{\Sset^{2n}},
\end{equation*}
where \(\omega_{\Sset^{2n}} \in C_c^\infty (\bigwedge^{2n} T^\ast \R^{2n+1})\) is a volume form on \(\Sset^{2n}\) such that \(\int_{\Sset^{2n}} \omega_{\Sset^{2n}} = 1\).
By \cref{lem:deg:powersharp}, we can choose \(g\) in such a way that 
\(
 \deg g = k
\)
and
\(\norm{Dg}_{L^\infty} \aleq k^{-1/2n}\).

We remark that
\begin{equation*}
   \S^{4n-1} 
 = 
   \bigl\{
     (x_+,x_-) \in \R^{2n} \times \R^{2n}
     :\;  |x_+|^2+|x_-|^2 = 1
   \bigr\}.
\end{equation*}
We define the sets 
\begin{align*}
 \S_\pm
 &= \bigl\{(x_+,x_-) \in \S^{4n - 1} :\; \abs{x_\pm} > \abs{x_\mp}\bigr\},
\end{align*}
and we observe that 
\begin{equation}
  \label{eq_thi5we5voo5eGheel}
    \partial \S_+ = \partial \S_- 
  = 
    \bigl\{(x_+,x_-) \in \S^{4n - 1} :\; \abs{x_+} = \abs{x_-}\bigr\} 
  = 
    \Sset^{2n - 1}_{1/\sqrt{2}} \times \Sset^{2n - 1}_{1/\sqrt{2}} .
\end{equation}
We define maps \(P_\pm : \Sset^{4n - 1} \to \Rset^{2n}\) by
\(
P_\pm (x_+, x_-) = \sqrt{2} \,x_\pm
\).
Set
\begin{equation*}
f (x)
\defeq
  \begin{cases}
    g (P_+ (x)) & \text{if \(x \in \Sset_+\)},\\
    g (P_- (x)) & \text{if \(x \in \Sset_-\)},\\
    a_* & \text{if \(x \in  \Sset^{2n - 1}_{1/\sqrt{2}} \times \Sset^{2n - 1}_{1/\sqrt{2}}\)}. 
  \end{cases}
\end{equation*}
We observe that $f$ is continuous since for $(x,y) \in  \Sset^{2n - 1}_{1/\sqrt{2}} \times \Sset^{2n - 1}_{1/\sqrt{2}}$, 
\(g (P_+ (x)) =g (P_- (x)) = a_*\). 
We have immediately that \(\norm{Df}_{L^\infty (\Sset^{4n - 1})} \aleq k^{-\frac{1}{2n}}\).

% Since \(d g^* \omega_{\Sset^{2n}} = 0\), 
Since \(g^* \omega_{\Sset^{2n}} = 0\) is a $2n$-form in $\R^{2n}$, it is closed, and by Poincar\'e Lemma we can find \(\eta \in C^\infty (\Ep^{2n - 1} T^\ast \Rset^{2n})\) such that 
\(d \eta = g^* \omega_{\Sset^{2n}}\).
We observe then that 
\begin{equation*}
  f^\ast\omega_{\Sset^{2n}} 
  = 
  P_+^\ast g^\ast\omega +P_-^\ast g^\ast\omega
  = d (P_+^\ast \eta + P_-^\ast \eta).
\end{equation*}

The Hopf degree of $f$ can now be computed as
\begin{equation*}
  \deg_H (f) = \int_{\S^{4n-1}} (P_+^\ast \eta + P_-^\ast \eta)\wedge  d (P_+^\ast \eta + P_-^\ast \eta).
\end{equation*}
We observe that 
\(
  P_\pm^\ast \eta \wedge d P_\pm^\ast \eta
  = P_\pm^\ast (\eta \wedge d \eta) = 0,
\) since $\eta \wedge d\eta$ is a $4n-1$ form of $T^\ast \R^{2n}$.
Thus 
\begin{equation*}
  \deg_H (f) = \int_{\S^{4n-1}} P_+^\ast \eta \wedge dP_-^\ast \eta 
  +
  \int_{\S^{4n-1}} P_-^\ast \eta \wedge dP_+^\ast \eta.
\end{equation*}
Since 
\(
  \supp dP_\pm^\ast \eta \subset \S_{\pm}
\),
we have by the Stokes--Cartan formula, in view of \eqref{eq_thi5we5voo5eGheel},
\begin{equation*}
  \int_{\S^{4n-1}} P_\pm^\ast \eta \wedge dP_\mp^\ast \eta = \int_{\S_\mp} P_\pm^\ast \eta \wedge dP_\mp^\ast \eta
  = \int_{\S^{2n-1}_{1/\sqrt{2}}\times \S^{2n-1}_{1/\sqrt{2}}} P_\pm^\ast \eta
  \wedge P_{\mp}^* \eta
  = \biggl(\int_{\S^{2n - 1}} \eta\biggr)^2.
\end{equation*}
Now again by Stokes
\begin{equation*}
  \int_{\S^{2n - 1}} \eta
  = \int_{\Bset^{2n}} d \eta = \int_{\Rset^{2n}} g^\ast \omega_{\Sset^{2n}}
  = \deg g = k
\end{equation*}
from which we conclude.
\end{proof}
Now we follow the strategy in \cite[Lemma III.1]{R98} to obtain
\begin{proof}[Proof of \cref{pr:degh:powersharp}]
Given \(d \in \Nset\) we choose \(k \in \N\) such that 
\((k - 1)^2 \le \abs{d} <  k^2\)
and we let \(f_d\) be given by \cref{proposition_deg_hopf_2} for this \(k\).
Hence we have,
\begin{align}
\abs{\deg_H (f_d)} &\ge \abs{d}&
% \end{equation*}
&\text{ and }&
% \begin{equation}
  \label{eq_zid6raeZereej6I}
  \abs{D f_d} &\aleq k^\frac{1}{2n} \aleq d^{\frac{1}{4n}}.
\end{align}
We now estimate by \eqref{eq_zid6raeZereej6I}
\begin{equation}
  \label{eq_Aas4eesohs3waig}
  \int_{\Sset^{4n - 1}}
  \abs{D f_d}^{4n - 1}
  \aleq 
  d^{1 - \frac{1}{4n}}.
\end{equation}
In view of \eqref{eq_Aas4eesohs3waig} and of the fractional Gagliardo--Nirenberg embedding \cite{BM17}, we have
\begin{equation*}
\begin{split}
[f_d]_{W^{s, \frac{4n - 1}{s}}}^\frac{4n - 1}{s}
\aleq
\norm{f_d}_{L^\infty}^{(4n - 1)(\frac{1}{s} - 1)}
\norm{f_d}_{W^{1, 4n - 1} (\Sset^{4n - 1})}^{4n - 1}
\aleq d^{1 - \frac{1}{4n}}. 
\end{split}
\end{equation*}
Taking the power $\frac{4n}{4n-1}$ on both side of the estimate, we conclude.
\end{proof}
Since the methods from Harmonic Analysis we use in \cref{th:Hdeg} are sharp, one might expect that an estimate as in in \cref{th:Hdeg} is not true for $s < \frac{4n-1}{4n}$ without additional topological restrictions. We are not able to prove this, but can only show that \cref{th:Hdeg} fails for $s< \frac{2n}{2n+1} < \frac{4n-1}{4n}$.
\begin{proposition}\label{pr:degh:blowup}
For any $s < \frac{2n}{2n+1}$ there exists a sequence of maps $f_k \in W^{s,\frac{4n-1}{s}}\cap L^\infty (\S^{4n-1},\R^{2n+1})$ such that 
\begin{equation*}
\sup_{k \in \N} \|f_k\|_{L^\infty} + [f_k]_{W^{s,\frac{4n-1}{s}}(\S^{4n-1})} < +\infty
\end{equation*}
but for $d\eta = f^\ast\omega_{\Sset^{2n}}$, we have
\begin{equation*}
\left |\int_{\S^{4n-1}} f_k^\ast\omega \wedge \eta \right | \xrightarrow{k \to \infty} +\infty.
\end{equation*}
\end{proposition}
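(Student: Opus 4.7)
The plan is to parallel the degree-case blow-up of \cref{pr:deg:blowup}: start from sphere-valued maps $F_k \colon \Sset^{4n-1} \to \Sset^{2n}$ with large Hopf invariant but controlled Lipschitz constant, and rescale them by a factor $k^{-\sigma}$ chosen so that the pull-back of a linear test form $\omega$ decreases fast enough to tame the fractional seminorm, but not so fast that the Whitehead integral also vanishes.

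Concretely, for every $k \in \Nset$ I would pick $F_k \in C^\infty(\Sset^{4n-1}, \Sset^{2n})$ supplied by \cref{proposition_deg_hopf_2}, with $\deg_H F_k = 2k^2$ and $\norm{D F_k}_{L^\infty} \aleq k^{1/(2n)}$. As test form take $\omega := x^1\,dx^2 \wedge \dotsb \wedge dx^{2n+1}$, cut off smoothly far from $\overline{\Bset^{2n+1}}$ so that $\omega \in C_c^\infty(\bigwedge^{2n} T^\ast \Rset^{2n+1})$ while $\omega|_{\Sset^{2n}}$ remains a (constant multiple of the) volume form; this is the natural analogue of the choice made in the proof of \cref{pr:deg:blowup}. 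For a parameter $\sigma > 0$ to be fixed I set $f_k := k^{-\sigma} F_k$.

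Since $\omega$ is $1$-homogeneous in the ``$x^1$''-factor and has $2n$ differentials, $f_k^\ast \omega = k^{-(2n+1)\sigma} F_k^\ast\omega$. The form $F_k^\ast \omega$ is closed on $\Sset^{4n-1}$ because $F_k$ lands in the $2n$-dimensional manifold $\Sset^{2n}$, so $F_k^\ast d\omega = 0$; since $H^{2n}_{\mathrm{dR}}(\Sset^{4n-1}) \simeq \{0\}$, it admits a primitive $\eta_{F_k}$, and $\eta_k := k^{-(2n+1)\sigma}\eta_{F_k}$ satisfies $d\eta_k = f_k^\ast \omega$. Then
\begin{equation*}
\int_{\Sset^{4n-1}} \eta_k \wedge f_k^\ast\omega = k^{-2(2n+1)\sigma} \int_{\Sset^{4n-1}} \eta_{F_k} \wedge F_k^\ast \omega = c(n)\,k^{-2(2n+1)\sigma} \deg_H F_k \asymp k^{2 - 2(2n+1)\sigma},
\end{equation*}
which diverges as $k \to \infty$ whenever $\sigma < \tfrac{1}{2n+1}$. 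On the other hand, the bounds $\norm{F_k}_{L^\infty} = 1$ and $\norm{DF_k}_{L^\infty} \aleq k^{1/(2n)}$ yield, either by splitting the Gagliardo double integral at the Lipschitz scale $k^{-1/(2n)}$ or via fractional Gagliardo--Nirenberg, $[F_k]_{W^{s,(4n-1)/s}(\Sset^{4n-1})} \aleq k^{s/(2n)}$; hence $[f_k]_{W^{s,(4n-1)/s}} \aleq k^{s/(2n)-\sigma}$, which is bounded provided $\sigma \ge \tfrac{s}{2n}$.

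The main (small) obstacle is the book-keeping of scaling exponents: the Whitehead integral is bilinear in $f_k$ through $\omega$, and $\omega$ is $(2n+1)$-homogeneous (one $x$-factor and $2n$ differentials), so the two factors together supply a $k^{-2(2n+1)\sigma}$ decay — far faster than the $k^{-\sigma}$ decay of the seminorm. Both requirements $\sigma \ge \tfrac{s}{2n}$ and $\sigma < \tfrac{1}{2n+1}$ can be met precisely when $\tfrac{s}{2n} < \tfrac{1}{2n+1}$, i.e., $s < \tfrac{2n}{2n+1}$, which is exactly the stated range; the construction is then completed by choosing $\sigma = \tfrac{s}{2n}$.
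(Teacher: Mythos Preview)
Your proposal is correct and follows essentially the same route as the paper's proof: rescale sphere-valued maps of large Hopf degree by a factor \(k^{-\sigma}\), balance the \((2n+1)\)-homogeneity of the Whitehead integrand against the linear scaling of the seminorm, and observe the two constraints are compatible precisely when \(s<\tfrac{2n}{2n+1}\). The only cosmetic difference is that you invoke \cref{proposition_deg_hopf_2} directly (maps with \(\deg_H F_k=2k^2\) and \(\norm{DF_k}_{L^\infty}\aleq k^{1/(2n)}\)), whereas the paper goes through \cref{pr:degh:powersharp}; this amounts to the reparametrisation \(k\leftrightarrow k^2\) and yields the same threshold.
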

\begin{proof}
From \cref{pr:degh:powersharp} we find for a sequence $k \to \infty$ maps $f_k \in C^\infty(\S^{4n-1},\S^{2n})$ such that
\begin{align*}
 [f_k]_{W^{s,\frac{4n-1}{s}}(\S^{2n})} &\aleq k^{\frac{s}{4n}}&
&\text{ and }&
 \deg_H f_k &\geq k.
\end{align*}
That is, for each $\eta_k \in C^\infty(\Ep^{2n-1}T^\ast \S^{4n-1})$ such that
\(
d\eta_k = f^\ast\omega_{\Sset^{2n}}\), we have 
\begin{equation*}
 \int_{\S^{4n-1}} \alpha_k \wedge f^\ast\omega = k.
\end{equation*}
Set $g_k := k^{-\sigma} f_k$, then
\begin{equation*}
 [g_k]_{W^{s,\frac{4n-1}{s}}(\S^{4n-1})} \aleq k^{\frac{s}{4n}-\sigma}.
\end{equation*}
On the other hand, if \(d\theta_k = g_k^\ast\omega_{\Sset^{2n}}\), then 
\(d (k^{\sigma(2n+1)}\theta_k) = f_k^* \omega_{\Sset^{2n}}\) and thus 
\begin{equation*}
 \int_{\S^{4n-1}} \theta_k \wedge g_k^\ast\omega = k^{-\sigma 2(2n+1)+1}.
\end{equation*}
We conclude that if we pick $\sigma = \frac{s}{4n}$ we have
\begin{equation*}
\sup_{k \in \Nset} [g_k]_{W^{s,\frac{4n-1}{s}}(\S^{4n-1})} < +\infty,
\end{equation*}
and if $s <\frac{2n}{2n+1} = 1-\frac{1}{2n+1}$,
\begin{equation*}
 \int_{\S^{4n-1}} \beta_k \wedge g_k^\ast\omega = k^{-\frac{s}{4n} 2(2n+1)+1} \xrightarrow{k \to \infty} +\infty.
 \qedhere
\end{equation*}
\end{proof}

% I don't mind using amsabbrv, but my bibtex doesn't find the bst-file. So temporarily I switch back to abbrv
% \bibliographystyle{amsabbrv}%
\bibliographystyle{amsabbrv}%
\bibliography{bib}%

\end{document}